\documentclass{amsart}

\usepackage{color,graphicx,amssymb,latexsym,amsfonts,txfonts,amsmath,amsthm}
\usepackage{pdfsync}
\usepackage{amsmath,amscd}
\usepackage[all,cmtip]{xy}

\usepackage{hyperref}
\hypersetup{
    colorlinks=true,       
    linkcolor=blue,          
    citecolor=blue,        
    filecolor=blue,      
    urlcolor=blue           
}

 \newtheorem{theo}{Theorem}[section]
 
 \newtheorem{lemm}[theo]{Lemma}
 \newtheorem{prop}[theo]{Proposition}
 \newtheorem{conj}[theo]{Question}
 \theoremstyle{definition}
 
 \theoremstyle{remark}
 \newtheorem{rem}[theo]{\bf Remark}
 \newtheorem{ex}{\bf Example}
 \numberwithin{equation}{section}

\begin{document}

\title{Computing the Field of moduli of some non-hyperelliptic pseudo-real curves}

\author{Rub\'en A. Hidalgo}
\address{Departamento de Matem\'atica y Estad\'{\i}stica, Universidad de La Frontera,  Temuco, Chile}
\email{ruben.hidalgo@ufrontera.cl}

\begin{abstract}
The explicit computation of the field of moduli of a closed Riemann surface is, in general, a difficult task. In this paper, for each even integer $k \geq 2$, we consider a 
suitable $2$-real parameter family of non-hyperelliptic pseudo-real Riemann surfaces of genus $g=1+(2k-3)k^{4}$. For each of them, we compute its field of moduli and also a minimal field of definition.
\end{abstract}

\thanks{Partially supported by project Fondecyt 1230001}

\keywords{Riemann Surfaces, Algebraic curves, Automorphisms, Fields of moduli} 
\subjclass[2010]{Primary 14H37, 14H45; Secondary 30F10}

\maketitle

\section{Introduction}
Fields of moduli were first introduced by Matsusaka \cite{Matsusaka} and later by Shimura \cite{Shimura} (for the case of polarized abelian varieties). In \cite{Koizumi}, Koizumi gave a more general definition for general algebraic varieties (even with extra structures). In general, the explicit computation of the field of moduli is a difficult task.
In this paper, we are concerned with explicit computations of the field of moduli in the case of algebraic curves.

Let $S$ be a closed Riemann surface of genus $g \geq 2$. Due to the Riemann-Roch theorem, $S$ can be realized by smooth irreducible complex projective algebraic curves, so we may consider its field of moduli ${\mathcal M}(S)$ (see Section \ref{Sec:FOM}).
A subfield of ${\mathbb C}$ is called a field of definition of $S$ if it is possible to find one of such curves defined over it. By \cite{Koizumi},  
it is known that $S$ has a field of definition which is a finite extension of ${\mathcal M}(S)$, and that ${\mathcal M}(S)$ coincides with the intersection of all those fields of definitions of $S$.
Weil's Galois descent theorem \cite{Weil} provides a sufficient condition for a subfield of ${\mathbb C}$ to be a field of definition of $S$. Such a result, in particular, asserts that if $S$ has a trivial group of automorphisms, then $S$ is definable over ${\mathcal M}(S)$.
In  \cite{Wo1,Wo2}, Wolfart noted that if $S/{\rm Aut}(S)$ has genus zero and exactly three cone points (i.e., $S$ is quasiplatonic), then again $S$ is definable over ${\mathcal M}(S)$. In \cite{AQ}, Artebani and Quispe proved that if $S/{\rm Aut}(S)$ has genus zero and has an odd number of cone points with the same cone order, then $S$ is also definable over ${\mathcal M}(S)$. 
If $S$ has a non-trivial group of automorphisms such that $S/{\rm Aut}(S)$ has genus zero, then $S$ admits a field of definition that is a quadratic extension of ${\mathcal M}(S)$ \cite{Hidalgo:FOD/FOM}. 

Explicit examples of hyperelliptic Riemann surfaces (Riemann surfaces admitting a two-fold-branched cover over the Riemann sphere) that cannot be defined over their field of moduli were provided by Earle in \cite{Earle1,Earle2} and by Shimura in \cite{Shimura} and later by Hugging in \cite{Huggins2}. Examples of non-hyperelli[tic Riemann surfaces with the same property were constructed in \cite{Hidalgo,Hidalgo2} and by Kontogeorgis in \cite{Kontogeorgis}. Later, other examples were obtained in \cite{AQ, Achq}.
The main idea, in all these examples, was to prove that the field of moduli is a subfield of ${\mathbb R}$ (which is equivalent for the Riemann surface to have an anticonformal automorphism) but that they cannot be defined over ${\mathbb R}$ (which is equivalent for the surface to have no anticonformal involutions \cite{Silhol}). These kinds of Riemann surfaces are called pseudo-reals. In those examples, it was not necessary to compute explicitly the field of moduli (it was only needed to construct anticonformal automorphisms and to prove that none of them can be chosen to be an involution). In \cite{BHQ}, it was constructed a family of (both, hyperelliptic and non-hyperelliptic) Riemann surfaces which are definable of ${\mathbb R}$ but which cannot be defined over 
the field of moduli.

In \cite{KFT}, it was computed the field of moduli of those non-hyperelliptic Riemann surfaces $S$ of genus three admitting a group $G$, isomorphic to the symmetric group $S_{4}$, as a group of automorphisms (in this case, the quotient $S/G$ has genus zero and four cone points of orders $2$, $2$, $2$ and $3$, so they are defined over their field of moduli). In \cite{HJ}, it was computed the field of moduli of those non-hyperelliptic Riemann surfaces $S$ admitting a group of automorphisms $H \cong {\mathbb Z}_{k}^{3}$, where $k \geq 3$ is an integer, such that $S/H$ is of genus zero and exactly four cone points, each one of order $k$ (these are non-hyperelliptic and definable over their fields of moduli).

In this paper, for each even integer $k \geq 2$, we construct a two-real dimensional family of non-hyperelliptic pseudo-real curves $C^{(k)}_{r,\theta}$ of genus $g=1+(2k-3)k^{4}$  (generalizing those constructed in \cite{Hidalgo,Hidalgo2}; corresponding to $k=2$). The curve $C^{(k)}_{r,\theta}$ admits a group $H \cong {\mathbb Z}_{k}^{5}$ of conformal automorphisms such that $C^{(k)}_{r,\theta}/H$ has genus zero and exactly six cone points, each one of order $k$. For each of them (see Theorem \ref{teo1}), we compute explicitly their field of moduli and also a minimal field of definition (which is necessarily a quadratic extension of the corresponding field of moduli).

\smallskip

This paper is structured as follows. In Section \ref{sec:prelim}, we recall the definitions of field of moduli, field of definition, and minimal field of definition, for the case of curves. In Section \ref{Sec:GFC}, we introduce the $2$-real dimensional family of pseudo-real curves $C^{(k)}_{r,\theta}$. In Section \ref{Sec:computation}, for each of these curves $C^{(k)}_{r,\theta}$, we describe those field automorphisms of ${\mathbb C}$ which determine their field of moduli (Proposition \ref{teo0}), and compute the field of moduli and a minimal field of definition of some of them (Theorem \ref{teo1}). In Section \ref{sec:examples}, we provide two explicit examples. Section \ref{Sec:pruebapropo} contains the proof of Proposition \ref{teo0}.

\section{Preliminaries}\label{sec:prelim}

\subsection{Field of moduli and fields of definition of curves}
Let $C$ be a smooth and irreducible complex projective algebraic curve, say 
defined as the set of common zeros of the homogeneous polynomials $P_{1},\ldots,P_{r} \in {\mathbb C}[x_{1},\ldots,x_{m}]$. In particular, $C$ defines a closed Riemann surface of some genus $g$. 

\subsubsection{The field of moduli of $C$}\label{Sec:FOM}
If $\sigma \in {\rm Aut}({\mathbb C}/{\mathbb Q})$ (the group of field automorphisms of ${\mathbb C}$), then we may consider the new polynomials $P_{1}^{\sigma}$,..., $P_{r}^{\sigma}$, where the coefficients of $P_{j}^{\sigma}$ are the corresponding images under $\sigma$ of the coefficients of the original polynomial $P_{j}$. The algebraic curve $C^{\sigma}$, defined by these new polynomials, is still a smooth and irreducible complex projective algebraic curve with the same algebraic invariants as $C$. In particular,  $C^{\sigma}$ is again a closed Riemann surface of genus $g$, but it might be that $C$ and $C^{\sigma}$ define non-conformally equivalent Riemann surfaces. 
Let $G_{C}<{\rm Aut}({\mathbb C}/{\mathbb Q})$ be the group defined by those $\sigma \in {\rm Aut}({\mathbb C}/{\mathbb Q})$ with the property that $C$ and $C^{\sigma}$ define conformally equivalent Riemann surfaces. The fixed field of $G_{C}$, denoted as ${\mathcal M}(C)$, is called the {\it field of moduli} of $C$.
In general, the computation of this field is not an easy task.

\begin{rem}
Note that if $C_{1}$ and $C_{2}$ are smooth and irreducible complex projective algebraic curves, defining conformally equivalent  Riemann surfaces, then for each $\sigma \in {\rm Aut}({\mathbb C}/{\mathbb Q})$ it holds that $C_{1}^{\sigma}$ and $C_{2}^{\sigma}$ are still conformally equivalent Riemann surfaces. In particular, the above provides an action of the group ${\rm Aut}({\mathbb C}/{\mathbb Q})$ over the moduli space ${\mathcal M}_{g}$  of genus $g$. If we denote by  $[C]$ the corresponding class of $C$ in ${\mathcal M}_{g}$, then $G_{C}$ is the stabilizer of $[C]$ with respect to the action of ${\rm Aut}({\mathbb C}/{\mathbb Q})$ on ${\mathcal M}_{g}$. 
\end{rem}

\subsubsection{Fields of definitions of $C$}
A subfield $K<{\mathbb C}$ is called a {\it field of definition} of $C$ if there is a smooth and irreducible complex projective algebraic curve $D$, defining a Riemann surface conformally equivalent to that of $C$, so that $D$ is defined by polynomials with coefficients in $K$. Notice from the definition that in this case ${\mathcal M}(C)={\mathcal M}(D)$. It is known that ${\mathcal M}(C)$ is a subfield of any field of definition of $C$ and that it is the intersection of all the fields of definitions of $C$ \cite{Koizumi}. Moreover, there is always a field of definition of $C$ being a finite extension of ${\mathcal M}(C)$ \cite{HH}. This fact permits to define a {\it minimal field of definition} for the curve $C$ as a field of definition so that the degree of the extension over ${\mathcal M}(C)$ is minimal along all fields of definitions of $C$  (minimal fields of definitions are not unique, but if ${\mathcal M}(C)$ is a field of definition of $C$, then it is the unique minimal field of definition of $C$). 

\subsection{Weil's Galois descent theorem}
\begin{theo}[Weil's Galois descent theorem \cite{Weil}]\label{Prop:Weil}
Let $C$ be a  smooth and irreducible complex projective algebraic curve defined over a finite Galois extension ${\mathbb L}$ of a field ${\mathcal M}$. 
If for every $\sigma \in {\rm Aut}({\mathbb L}/{\mathcal M})$ there is an isomorphism $f_{\sigma}:C \to C^{\sigma}$, defined over ${\mathbb L}$, such that for all $\sigma, \tau \in {\rm Aut}({\mathbb L}/{\mathcal M})$ the compatibility condition $f_{\tau\sigma}=f^{\tau}_{\sigma} \circ f_{\tau}$
holds (called a Weil datum), then there exists a smooth projective algebraic curve $E$ defined over ${\mathcal M}$ and there exists a biholomorphism $Q:C \to E$, defined over ${\mathbb L}$, such that $Q^{\sigma} \circ f_{\sigma}=Q$.
\end{theo}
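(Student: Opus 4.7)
The plan is to convert the Weil datum $\{f_{\sigma}\}$ into a genuine semilinear action of $G := \mathrm{Aut}(\mathbb{L}/\mathcal{M})$ on the function field of $C$, and then to invoke the standard Galois descent for fields. Set $K := \mathbb{L}(C)$. For each $\sigma \in G$ there is the tautological $\sigma$-semilinear isomorphism $\rho_{\sigma} \colon \mathbb{L}(C) \to \mathbb{L}(C^{\sigma})$ obtained by applying $\sigma$ to the coefficients of each rational function, together with the ordinary pullback $f_{\sigma}^{*} \colon \mathbb{L}(C^{\sigma}) \to \mathbb{L}(C)$. I would then set $\tilde{\sigma} := f_{\sigma}^{*} \circ \rho_{\sigma} \colon K \to K$; this is a ring automorphism that restricts to $\sigma$ on $\mathbb{L}$. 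The first real task is to verify the group law $\widetilde{\tau\sigma} = \tilde{\tau} \circ \tilde{\sigma}$. Using $f_{\tau\sigma}^{*} = f_{\tau}^{*} \circ (f_{\sigma}^{\tau})^{*}$, which is the Weil compatibility $f_{\tau\sigma} = f_{\sigma}^{\tau} \circ f_{\tau}$ pulled back, together with the factorization $\rho_{\tau\sigma} = \rho_{\tau} \circ \rho_{\sigma}$ under the canonical identification $(C^{\sigma})^{\tau} = C^{\tau\sigma}$, the check reduces to the naturality identity $(f_{\sigma}^{\tau})^{*} \circ \rho_{\tau} = \rho_{\tau} \circ f_{\sigma}^{*}$, which simply asserts that applying $\tau$ to coefficients commutes with pullback along a morphism of $\mathbb{L}$-schemes.

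Next, let $K_{0} := K^{G}$ be the fixed field of this action. Because the action of $G$ on $K$ is semilinear over its action on $\mathbb{L}$ and $\mathbb{L}/\mathcal{M}$ is finite Galois, standard Galois descent for $\mathbb{L}$-vector spaces (the additive form of Hilbert 90) yields the descent isomorphism $K_{0} \otimes_{\mathcal{M}} \mathbb{L} \xrightarrow{\sim} K$. In particular, $K_{0}$ is a finitely generated field of transcendence degree one over $\mathcal{M}$. Let $E$ denote its unique smooth projective model over $\mathcal{M}$; then $E_{\mathbb{L}}$ has function field $K$, and the resulting birational identification extends uniquely to an $\mathbb{L}$-isomorphism $Q \colon C \to E_{\mathbb{L}}$, since both sides are smooth projective curves. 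The required identity $Q^{\sigma} \circ f_{\sigma} = Q$ then amounts, on function fields, to the statement that the inclusion $K_{0} \hookrightarrow K$ is invariant under $\tilde{\sigma}$, which holds by the very definition of $K_{0}$.

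The main obstacle is the bookkeeping in the first step: translating the cocycle $f_{\tau\sigma} = f_{\sigma}^{\tau} \circ f_{\tau}$ into the group-action identity for $\tilde{\sigma}$ requires careful attention to which side of the pullback or semilinear twist one is on, and to the canonical identification $(C^{\sigma})^{\tau} = C^{\tau\sigma}$. Everything beyond that is standard: the existence of a smooth projective model for a finitely generated function field in one variable over $\mathcal{M}$, and the passage from function-field inclusions back to morphisms between smooth projective curves.
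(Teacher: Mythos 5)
Your argument is correct, but note that the paper does not prove this statement at all: it is quoted verbatim as Weil's classical descent theorem with a citation to Weil's 1956 paper, so there is no ``paper proof'' to match. What you give is the standard function-field proof, which is genuinely different in flavour from Weil's original argument: Weil works geometrically with (quasi-)projective varieties of arbitrary dimension, whereas your route exploits the fact that in dimension one the category of smooth projective curves with dominant morphisms is anti-equivalent to the category of their function fields. Concretely, your reduction of the cocycle condition $f_{\tau\sigma}=f_{\sigma}^{\tau}\circ f_{\tau}$ to the naturality identity $(f_{\sigma}^{\tau})^{*}\circ\rho_{\tau}=\rho_{\tau}\circ f_{\sigma}^{*}$ is the right computation (with the convention $(C^{\sigma})^{\tau}=C^{\tau\sigma}$ it yields $\widetilde{\tau\sigma}=\tilde{\tau}\circ\tilde{\sigma}$, a genuine left semilinear action), and Speiser's lemma applies to $K=\mathbb{L}(C)$ as an $\mathbb{L}$-vector space of any dimension, giving $K_{0}\otimes_{\mathcal{M}}\mathbb{L}\xrightarrow{\sim}K$. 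The trade-off is that your proof buys elementarity (linear-algebraic descent plus the curve/function-field dictionary) at the cost of being special to curves, which is all this paper needs.

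A few points you should make explicit to close the argument. First, you need $f_{\mathrm{id}}=\mathrm{id}$ for $\sigma\mapsto\tilde{\sigma}$ to be an action; this follows from the cocycle with $\sigma=\tau=\mathrm{id}$ since $f_{\mathrm{id}}$ is an isomorphism. Second, for $E_{\mathbb{L}}$ to be irreducible with function field $K$ you need $K_{0}\otimes_{\mathcal{M}}\mathbb{L}$ to be a field; this is exactly what the descent isomorphism gives you, and (in characteristic zero) it also yields that $K_{0}/\mathcal{M}$ is regular, so $E$ is geometrically irreducible and smooth after base change. Third, the final identity $Q^{\sigma}\circ f_{\sigma}=Q$ requires slightly more than invariance of $K_{0}\hookrightarrow K$: unwinding $(Q^{\sigma}\circ f_{\sigma})^{*}=Q^{*}$ via the same naturality identity gives $\tilde{\sigma}\circ Q^{*}=Q^{*}\circ\rho_{\sigma}^{E_{\mathbb{L}}}$, and since $\mathbb{L}(E_{\mathbb{L}})$ is generated by $\mathcal{M}(E)=K_{0}$ and $\mathbb{L}$, you must check equivariance on both pieces: on $K_{0}$ it is the invariance you cite, and on $\mathbb{L}$ it is the fact that $Q$ is an $\mathbb{L}$-morphism while $\tilde{\sigma}|_{\mathbb{L}}=\sigma$. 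With these small additions the proof is complete.
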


In \cite{DE,HH}, it was proved that a smooth projective algebraic curve can be defined over a  finite Galois extension of its field of moduli (in most cases, such an extension can be assumed quadratic \cite{Hidalgo:FOD/FOM}).
This observation, together with Weil's Galois descent theorem, provides sufficient conditions for the field of moduli to be a field of definition, i.e., 
for complex curves of genus at least two, we may work with ${\rm Aut}({\mathbb C}/{\mathbb Q})$ and the extension ${\mathbb Q}<{\mathbb C}$ instead of restricting to finite Galois extensions.

\begin{theo}
Let $C$ be a  smooth and irreducible complex projective algebraic curve of genus at least two. Let 
${\mathcal M}$ be a subfield of ${\mathbb C}$ contained in the field of definition of $C$. Let us assume that 
for every $\sigma \in {\rm Aut}({\mathbb C}/{\mathcal M})$ there is a biholomorphism $f_{\sigma}:C \to C^{\sigma}$ such that for all $\sigma, \tau \in {\rm Aut}({\mathbb C}/{\mathcal M})$ the compatibility condition $f_{\tau\sigma}=f^{\tau}_{\sigma} \circ f_{\tau}$
holds. Then ${\mathcal M}$ is a field of definition of $C$.
\end{theo}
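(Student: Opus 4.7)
The plan is to reduce the descent datum over the profinite group $\mathrm{Aut}(\mathbb{C}/\mathcal{M})$ to one indexed by the finite Galois group of a finite Galois extension of $\mathcal{M}$, and then invoke Theorem \ref{Prop:Weil} directly.

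By \cite{HH} I may replace $C$ with a biholomorphic model defined over some finite Galois extension $\mathbb{L}_{0}/\mathcal{M}$, transporting the family $\{f_{\sigma}\}$ along the way. Since $g\geq 2$ the group $\mathrm{Aut}(C)$ is finite, so I can enlarge $\mathbb{L}_{0}$ to a finite Galois extension $\mathbb{L}_{1}/\mathcal{M}$ over which both $C$ and every element of $\mathrm{Aut}(C)$ are defined. For $\tau\in\mathrm{Aut}(\mathbb{C}/\mathbb{L}_{1})$ one has $C^{\tau}=C$ and hence $f_{\tau}\in\mathrm{Aut}(C)$; combining the cocycle identity with the $\mathbb{L}_{1}$-rationality of the elements of $\mathrm{Aut}(C)$ yields $f_{\tau\tau'}=f_{\tau'}\circ f_{\tau}$, so $\tau\mapsto f_{\tau}$ is an anti-homomorphism into the \emph{finite} group $\mathrm{Aut}(C)$.

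The main step is the construction of a finite Galois extension $\mathbb{L}/\mathcal{M}$ containing $\mathbb{L}_{1}$ with the following two properties: (i) $f_{\tau}=\mathrm{id}_{C}$ for every $\tau\in\mathrm{Aut}(\mathbb{C}/\mathbb{L})$, and (ii) for every $\sigma\in\mathrm{Gal}(\mathbb{L}/\mathcal{M})$ some extension $\tilde{\sigma}\in\mathrm{Aut}(\mathbb{C}/\mathcal{M})$ of $\sigma$ has $f_{\tilde{\sigma}}$ rational over $\mathbb{L}$. Property (i) is achieved by taking $\mathbb{L}$ to contain the fixed field of the kernel of the above anti-homomorphism (an open subgroup of $\mathrm{Aut}(\mathbb{C}/\mathbb{L}_{1})$ since its image is finite), while (ii) is arranged by further enlarging $\mathbb{L}$ to contain the finitely many fields of definition of chosen biholomorphisms $f_{\tilde{\sigma}}$, one for each $\sigma$ in the intermediate finite Galois group. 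This is the step I expect to be the main technical obstacle: arranging (i) and (ii) simultaneously requires a careful iterated enlargement, and it uses essentially both the finiteness of $\mathrm{Aut}(C)$ (which requires $g\geq 2$) and the finiteness of each intermediate Galois group.

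Once such an $\mathbb{L}$ is in place, I define $g_{\sigma}:=f_{\tilde{\sigma}}$ for the chosen extension. Any other extension $\tilde{\sigma}'$ of $\sigma$ differs from $\tilde{\sigma}$ by an element $\tau\in\mathrm{Aut}(\mathbb{C}/\mathbb{L})$, so the cocycle identity together with (i) and the $\mathbb{L}$-rationality of $f_{\tilde{\sigma}}$ forces $f_{\tilde{\sigma}'}=g_{\sigma}$; thus $g_{\sigma}$ is well-defined and $\mathbb{L}$-rational. Applying the cocycle identity to the product $\tilde{\sigma}_{1}\tilde{\sigma}_{2}$ then yields $g_{\sigma_{1}\sigma_{2}}=g_{\sigma_{2}}^{\sigma_{1}}\circ g_{\sigma_{1}}$, so $\{g_{\sigma}\}_{\sigma\in\mathrm{Gal}(\mathbb{L}/\mathcal{M})}$ is a Weil datum in the sense of Theorem \ref{Prop:Weil}. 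That theorem then produces a smooth projective curve $E$ defined over $\mathcal{M}$ with $E\cong C$, showing that $\mathcal{M}$ is a field of definition of $C$.
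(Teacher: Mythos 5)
Your overall strategy is the same as the paper's: replace $C$ by a model defined over a finite Galois extension of ${\mathcal M}$, push the descent datum down to the finite Galois group, and invoke Theorem \ref{Prop:Weil}. You have also correctly isolated where the real work lies: the induced family must factor through ${\rm Gal}({\mathbb L}/{\mathcal M})$, i.e. one must arrange $f_{\tau}={\rm id}_{C}$ for every $\tau\in{\rm Aut}({\mathbb C}/{\mathbb L})$. Granting your properties (i) and (ii), your verification that $g_{\sigma}$ is well defined and that $\{g_{\sigma}\}_{\sigma\in{\rm Gal}({\mathbb L}/{\mathcal M})}$ satisfies the Weil cocycle condition is correct.

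The gap is in your justification of (i). The assignment $\tau\mapsto f_{\tau}$ on ${\rm Aut}({\mathbb C}/{\mathbb L}_{1})$ is only an abstract anti-homomorphism into the finite group ${\rm Aut}(C)$: nothing in the hypothesis makes it continuous, and ${\rm Aut}({\mathbb C}/{\mathbb L}_{1})$ is not profinite (the extension ${\mathbb C}/{\mathbb L}_{1}$ is highly transcendental), so ``finite image'' gives only finite index of the kernel, not openness, and in particular not that the kernel contains ${\rm Aut}({\mathbb C}/{\mathbb L})$ for some finite extension ${\mathbb L}/{\mathbb L}_{1}$. Moreover the fixed field of the kernel need not be a finite, or even algebraic, extension of ${\mathcal M}$, and even if one sets ${\mathbb L}$ equal to (a Galois closure of) that fixed field, there is no Galois correspondence available to guarantee ${\rm Aut}({\mathbb C}/{\mathbb L})\subset\ker$; so (i) is not established by the argument given, and it is exactly the point on which the whole reduction to Theorem \ref{Prop:Weil} rests. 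For comparison, the paper's proof treats this point only implicitly: it records that, ${\rm Aut}(C)$ being finite, each $f_{\sigma}$ lies in one of the finitely many finite sets ${\rm Isom}(C,C^{\sigma})$ and hence is defined over a finite extension of ${\mathcal M}$, so that $C$ and all the $f_{\sigma}$ are defined over one finite Galois extension ${\mathbb L}$, and then applies Weil; making the induced datum on ${\rm Gal}({\mathbb L}/{\mathcal M})$ well defined still needs an argument of the kind you attempted, so you located the crux correctly but the proposed resolution does not work as written. A secondary point: to keep ${\mathbb L}/{\mathcal M}$ finite in your step (ii) you must know that each chosen $f_{\tilde{\sigma}}$ is defined over a finite extension of ${\mathcal M}$; this follows from the finiteness of ${\rm Isom}(C,C^{\sigma})$ (a torsor under the finite group ${\rm Aut}(C)$), which you should state explicitly rather than assume.
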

\begin{proof}
 As $C$ has a finite number of automorphisms, it can be checked that each $f_{\sigma}$ is defined over a finite extension of ${\mathcal M}$. In this way, we may find a finite Galois extension ${\mathbb L}$ of ${\mathcal M}$ over which $C$ and all the isomorphisms $f_{\sigma}$ are defined. Then we may use Weil's Galois descent theorem to ensure that there exists a smooth projective algebraic curve $E$ defined over ${\mathcal M}$ and there exists a biholomorphism $Q:C \to E$, defined over ${\mathbb L}$, such that $Q^{\sigma} \circ f_{\sigma}=Q$. 
\end{proof}

\subsection{Generalized circles and cross-ratio}\label{cross-ratio}
A generalized circle in the Riemann sphere $\widehat{\mathbb C}$ is either a Euclidian circle in ${\mathbb C}$ or the union of $\infty$ with an Euclidian line in ${\mathbb C}$.
Given four different points $a,b,c,d \in \widehat{\mathbb C}$, it is defined the cross-ratio $[a,b,c,d]=T(d)$, where $T$ is a the unique M\"obius transformation satisfying that $T(a)=\infty$, $T(b)=0$ and $T(c)=1$. By the definition, $[a,b,c,d] \in {\mathbb C}-\{0,1\}$.
If $M$ is any M\"obius transformation, then $[M(a),M(b),M(c),M(d)]=[a,b,c,d]$. 
The points $a,b,c,d$ belong to a common generalized circle if and only if $[a,b,c,d] \in {\mathbb R}$. In particular, M\"obius transformations send generalized circles into generalized circles.
Let $${\mathbb G}=\langle A(z)=1/z, B(z)=z/(z-1)\rangle \cong {\mathfrak S}_{3}\; \mbox{\rm (the symmetric group in three letters)}.$$

Any permutation of the four points changes the value of $[a,b,c,d]$ to a value $R([a,b,c,d])$, where $R \in {\mathbb G}$. In particular, if $[a,b,c,d] \in \{-1,1/2,2\}$ then the cross-ratio of any permutation of these four points is still in the same set.
If $a \neq 0, \infty$, then $[\infty,0,a,-a]=-1$. The only cross-ratios, obtained by permutation of $\infty$, $0$, $a$ and $-a$, producing the same value of $-1$ are given by $[\infty,0,a,-a]$, $[\infty,0,-a,a]$, $[0,\infty,a,-a]$, $[0,\infty,-a,a]$, $[a,-a,\infty,0]$, $[-a,a,\infty,0]$, $[a,-a,0,\infty]$ and $[-a,a,0,\infty]$.

\subsection{A parameter space}
Let us denote by $\Omega \subset {\mathbb C}^{3}$ the region consisting of those triples $(\lambda_{1},\lambda_{2},\lambda_{3})$ such that (i) $\lambda_{j} \in {\mathbb C}\setminus \{0,1\}$, and (ii) $\lambda_{i} \neq \lambda_{j}$ if $i \neq j$. Some holomorphic automorphisms of $\Omega$ are given by the following ones  
$$A(\lambda_{1},\lambda_{2},\lambda_{3})=\left(\frac{\lambda_{3}}{\lambda_{3}-1},\frac{\lambda_{3}}{\lambda_{3}-\lambda_{1}},\frac{\lambda_{3}}{\lambda_{3}-\lambda_{2}}\right), \quad B(\lambda_{1},\lambda_{2},\lambda_{3})=(\lambda_{1}^{-1},\lambda_{2}^{-1},\lambda_{3}^{-1}).$$
It can be checked that  ${\mathbb U}=\langle A,B \rangle \cong {\mathfrak S}_{6}$, the symmetric group in six letters.

\section{Generalized Fermat curves of type $(k,5)$}\label{Sec:GFC}
A closed Riemann surface $S$ is called a {\it generalized Fermat curve of type $(k,5)$}, where $k \geq 2$ is an integer, if it admits a group $H \cong {\mathbb Z}_{4}^{5}$ of conformal automorphisms such that the quotient orbifold $S/H$ has signature $(0;k,k,k,k,k,k)$, that is, of genus zero and with exactly six cone points, each one of order $k$. We say that $H$ is a {\it generalized Fermat group of type $(k,5)$} and that $(S,H)$ is a {\it generalized Fermat pair of type $(k,5)$}. In \cite{GHL}, it was observed that $S$ is non-hyperelliptic. By the Riemann-Hurwitz formula, $S$ has genus $g=1+(2k-3)k^{4}$.

Let $(S,H)$ be a generalized Fermat pair of type $(k,5)$ and let $\pi:S \to \widehat{\mathbb C}$ be a holomorphic branched Galois covering with deck group $H$.
Up to a M\"obius transformation, we may assume these six branched values of $\pi$ to be $\infty$, $0$, $1$, 
$\lambda_{1},\lambda_{2},\lambda_{3}$, where $(\lambda_{1},\lambda_{2},\lambda_{3}) \in \Omega$. It is known (\cite{CGHR,GHL,Hidalgo:Homology}) that (i) $S$ can be algebraically described by the  smooth irreducible projective algebraic curve 
$$C^{(k)}(\lambda_{1},\lambda_{2},\lambda_{3})=\left\{ \begin{array}{ccc}
x_{1}^{k} + x_{2}^{k} + x_{3}^{k}&=&0\\
\lambda_{1} x_{1}^{k} + x_{2}^{k} + x_{4}^{k}&=&0\\
\lambda_{2} x_{1}^{k} + x_{2}^{k} + x_{5}^{k}&=&0\\
\lambda_{3} x_{1}^{k} + x_{2}^{k} + x_{6}^{k}&=&0
\end{array}
\right\}
\subset {\mathbb P}^5
$$
(ii) $H=\langle a_{1},a_{2},a_{3},a_{4},a_{5}\rangle$, where 
$a_{j}$ is the linear transformation given by multiplication of the $x_{j}$-coordinate by $e^{2 \pi i/k}$, and (iii) $\pi$, in the above model, is defined by 
$$\pi:C_{r,\theta} \to \widehat{\mathbb C}: [x_{1}:\cdots:x_{6}] \mapsto -\left( \frac{x_{2}}{x_{1}} \right)^{k}.$$

\begin{rem}
In \cite{Hidalgo:JNT}, it was noted that there exist six holomorphic differential forms, $\Theta_{1},\ldots,\Theta_{6} \in {\rm H}^{1,0}(S)$, such that the map
$F:S \to {\mathbb P}^{5}$, given by $F(x)=[\Theta_{1}(x):\dots:\Theta_{6}(x)]$, is a holomorphic embedding whose image is a curve as above.
\end{rem}

In \cite{HKLP}, it was noted that every generalized Fermat curve of type $(k,5)$ admits a unique generalized Fermat group of that type. As a consequence of this uniqueness, we may obtain the following characterization for those $\sigma \in G_{C^{(k)}(\lambda_{1},\lambda_{2},\lambda_{3})}$.

\begin{lemm}\label{lemita1}
The field of moduli of $C^{(k)}(\lambda_{1},\lambda_{2},\lambda_{3})$ is the fixed field of the subgroup of ${\rm Aut}({\mathbb C}/{\mathbb Q})$ consisting of those $\sigma$ such that there is a M\"obius transformation $T_{\sigma}$ for which $\{T_{\sigma}(\infty),T_{\sigma}(0),T_{\sigma}(1),T_{\sigma}(\lambda_{1}),T_{\sigma}(\lambda_{2}),T_{\sigma}(\lambda_{3})\}=\{\infty,0,1,\sigma(\lambda_{1}),\sigma(\lambda_{2}),\sigma(\lambda_{3})\}$.
\end{lemm}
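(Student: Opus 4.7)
The plan is to identify the Galois group $G_{C}=\{\sigma \in \mathrm{Aut}(\mathbb{C}/\mathbb{Q}) : C^{\sigma} \cong C\}$ with the subgroup described in the statement, after which the result follows at once from the definition of the field of moduli as the fixed field of $G_{C}$. The key input is the uniqueness, recalled from \cite{HKLP}, of the generalized Fermat group of type $(k,5)$ on a generalized Fermat curve.

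For $\sigma \in \mathrm{Aut}(\mathbb{C}/\mathbb{Q})$, applying $\sigma$ to the defining equations of $C=C^{(k)}(\lambda_{1},\lambda_{2},\lambda_{3})$ gives $C^{\sigma}=C^{(k)}(\sigma(\lambda_{1}),\sigma(\lambda_{2}),\sigma(\lambda_{3}))$, which is again a generalized Fermat curve of type $(k,5)$; its standard Fermat group $H^{\sigma}$ is again $\mathbb{Z}_{k}^{5}$ (the generators $a_{j}$ have coefficients in $k$-th roots of unity), and the associated Galois cover $\pi^{\sigma}:C^{\sigma}\to\widehat{\mathbb{C}}$ has branch locus $\{\infty,0,1,\sigma(\lambda_{1}),\sigma(\lambda_{2}),\sigma(\lambda_{3})\}$ with every cone order equal to $k$. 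For the forward direction, suppose $\phi: C \to C^{\sigma}$ is a biholomorphism. By the uniqueness of the Fermat group, the conjugate $\phi H \phi^{-1}$ must coincide with $H^{\sigma}$, so $\phi$ descends to a biholomorphism $\widehat{\phi}: C/H \to C^{\sigma}/H^{\sigma}$ of the quotient orbifolds. Using the identifications of $C/H$ and $C^{\sigma}/H^{\sigma}$ with $\widehat{\mathbb{C}}$ provided by $\pi$ and $\pi^{\sigma}$, the map $\widehat{\phi}$ becomes a Möbius transformation $T_{\sigma}$ sending the branch set of $\pi$ bijectively onto the branch set of $\pi^{\sigma}$, which is exactly the set equality in the statement.

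For the converse, given such a $T_{\sigma}$, since all six cone points have the common order $k$ the set condition automatically promotes $T_{\sigma}$ to an orbifold isomorphism between the two quotients. Because both $C$ and $C^{\sigma}$ are generalized Fermat covers of type $(k,5)$ over (via $T_{\sigma}$) the same base orbifold, and such a cover is unique up to biholomorphism (again by \cite{GHL, HKLP}), the orbifold isomorphism $T_{\sigma}$ lifts to a biholomorphism $\phi: C \to C^{\sigma}$. Combining the two directions yields the asserted characterization of $G_{C}$, and hence of $\mathcal{M}(C)$.

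The main obstacle in this strategy is precisely the double appeal to the rigidity of the Fermat structure: the forward direction needs that any biholomorphism $C \to C^{\sigma}$ conjugate the Fermat group of $C$ onto that of $C^{\sigma}$, while the converse direction needs that every orbifold isomorphism of the two bases lift to a biholomorphism of the total spaces. Both of these are consequences of the uniqueness theorem in \cite{HKLP}, so once that result is invoked the remainder is only a bookkeeping exercise with the definition of $G_{C}$.
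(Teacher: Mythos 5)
Your proposal is correct and follows essentially the same route as the paper: the paper's one-line proof invokes the uniqueness of the generalized Fermat group (from \cite{HKLP}, together with \cite{GHL}) to conclude that two such curves are conformally equivalent exactly when their branch sets $\{\infty,0,1,\lambda_{1},\lambda_{2},\lambda_{3}\}$ are related by a M\"obius transformation, which is precisely the descent-and-lift argument you spell out. Your write-up merely makes explicit the two implications (biholomorphisms conjugate $H$ onto $H^{\sigma}$ and hence descend; orbifold isomorphisms of the base lift since the cover is determined by the branch locus) that the paper leaves to the cited references.
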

\begin{proof}
The uniqueness property of $H$ permit us to obeserve (see also \cite{GHL}) that $C^{(k)}(\lambda_{1},\lambda_{2},\lambda_{3})$ and $C^{(k)}(\hat{\lambda}_{1},\hat{\lambda}_{2},\hat{\lambda}_{3})$ are conformally equivalent if and only if there exists some $U \in {\mathbb U}$ such that $U(\lambda_{1},\lambda_{2},\lambda_{3})=(\hat{\lambda}_{1},\hat{\lambda}_{2},\hat{\lambda}_{3})$.
\end{proof}

\begin{rem}
The above result permits us to observe that ${\mathcal M}(C^{(k)}(\lambda_{1},\lambda_{2},\lambda_{3}))$ is also the field of moduli of the genus two Riemann surface $y^{2}=x(x-1)(x-\lambda_{1})(x-\lambda_{2})(x-\lambda_{3})$.

\end{rem}

\subsection{The two-real-dimensional family $C^{(k)}_{r,\theta}$}
As, for the generic tuple $(\lambda_{1},\lambda_{2},\lambda_{3}) \in \Omega$, the group of M\"obius transformations keeping invariant the set $\{\infty,0,1,\lambda_{1},\lambda_{2},\lambda_{3}\}$ is trivial, the uniqueness of $H$ also permits to observe that, for such a generic tuple, one has ${\rm Aut}(C^{(k)}(\lambda_{1},\lambda_{2},\lambda_{3}))=H$. 
Now, assume ${\rm Aut}(C^{(k)}(\lambda_{1},\lambda_{2},\lambda_{3}))=H$. If $Q$ is an anticonformal automorphism of $C^{(k)}(\lambda_{1},\lambda_{2},\lambda_{3})$, then $Q^{2} \in H$ and every anticonformal automorphism of such a curve is of the from $h \circ Q$, where $h \in H$.
Below, we proceed to describe a two-real-dimensional family of those curves satisfying the above properties.

\subsubsection{}
If $\theta, r$ are real numbers, $r>1$, then we set $$C^{(k)}_{r,\theta}=C^{(k)}(-r^{2},re^{i\theta},-re^{i\theta}).$$

\begin{rem}
(1) The curves $C^{(k)}_{r,\theta}$ and $C^{(k)}_{r,\theta+l\pi}$, where $l \in {\mathbb Z}$, are conformally equivalent Riemann surfaces.
As $C_{r,\theta}=C_{r,\theta+2l\pi}$, we only need to consider the case when $l$ is odd. In this case, 
the transformation 
$T_{1}([x_{1}:\cdots:x_{6}])=[x_{1}:x_{2}:x_{3}:x_{4}:x_{6}:x_{5}]$ defines a conformal equivalence between $C_{r,\theta}$ and $C_{r,\theta+l\pi}$.
(2) The curves $C^{(k)}_{r,\theta}$ and $C^{(k)}_{r,-\theta+l\pi}$, where $l \in {\mathbb Z}$, are anticonformally equivalent Riemann surfaces.
The transformation 
$T_{2}([x_{1}:\cdots:x_{6}])=[\overline{x_{1}}:\cdots: \overline{x_{6}}]$ defines an anticonformal equivalence between $C^{(k)}_{r,\theta}$ and $C^{(k)}_{r,-\theta+l\pi}$.
\end{rem}

\subsubsection{}
Let us define $r_{1}(\theta)=\sqrt{1+\cos(\theta)^{2}}-\cos(\theta)$ and $r_{2}(\theta)=\sqrt{1+\cos(\theta)^{2}}+\cos(\theta)$ (so 
$r_{1}(\theta)r_{2}(\theta)=1$). If we assume that $e^{i\theta} \notin \{\pm 1, \pm i\}$, then $r_{j}(\theta) \neq 1$ and 
$$P_{\theta}=\{-(\sqrt{1+\cos(\theta)^{2}}-\cos(\theta))^{2}, -(\sqrt{1+\cos(\theta)^{2}}+\cos(\theta))^{2}\} \cap (-\infty,-1)$$
has cardinality $1$; say $P_{\theta}=\{r_{\theta}\}$. For instance, $P_{\pi/3}=\{-(3+\sqrt{5})/2\}$.

\begin{lemm}\label{trivialgroup}
If $\theta$ is so that $e^{i\theta} \notin \{\pm 1, \pm i\}$, and $r>1$ is so that $r \neq r_{\theta}$, then 
${\rm Aut}(C^{(k)}_{r,\theta})=H$.
\end{lemm}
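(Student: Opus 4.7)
$\textbf{Plan.}$ By the argument sketched in the paragraph preceding the statement (using the uniqueness of $H$ from \cite{HKLP}), it suffices to show that the only M\"obius transformation $T$ which setwise permutes the six branch values $\{\infty,0,1,-r^{2},a,-a\}$ of $\pi$ --- where $a:=re^{i\theta}$ --- is the identity. Suppose, for contradiction, that some nontrivial such $T$ exists.

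The key observation is that $[\infty,0,a,-a]=-1$, so $\{\infty,0,a,-a\}$ is a harmonic 4-subset. My first step is to show that, under the stated hypotheses, this is the \emph{only} harmonic 4-subset among the fifteen 4-subsets of the six points. Because harmonicity depends only on the $\mathbb{G}$-orbit of the cross-ratio, for each of the fourteen remaining 4-subsets one computes a single cross-ratio $\lambda(r,a)$ as a rational function of $r$ and $a$ and verifies that the condition $\lambda\in\{-1,1/2,2\}$, together with $|a|=r>1$, would force at least one of the excluded conditions $r=1$, $e^{i\theta}\in\{\pm 1,\pm i\}$, or $|\cos\theta|>1$.

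Since $T$ preserves cross-ratios, uniqueness of the harmonic 4-subset forces $T(\{\infty,0,a,-a\})=\{\infty,0,a,-a\}$, and hence also $T(\{1,-r^{2}\})=\{1,-r^{2}\}$. The stabilizer of $\{\infty,0,a,-a\}$ in the M\"obius group is dihedral of order eight, with elements $z$, $-z$, $\pm a^{2}/z$, $\pm a(a-z)/(a+z)$, $\pm a(a+z)/(a-z)$. I test each of the seven nontrivial elements against $T(\{1,-r^{2}\})=\{1,-r^{2}\}$. The involutions $-z$ and $\pm a^{2}/z$ are ruled out at once by $r\neq 1$ and $e^{i\theta}\notin\{\pm 1,\pm i\}$. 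For each of the four remaining transformations, the two possibilities for the induced permutation of $\{1,-r^{2}\}$ reduce, after clearing denominators, to the \emph{same} quadratic in $a$ with polynomial coefficients in $r$; using $a\bar a=r^{2}$ and $a+\bar a=2r\cos\theta$, this quadratic becomes the real relation $r^{2}\pm 2r\cos\theta-1=0$, whose unique positive root greater than $1$ is exactly the value of $r$ excluded by the hypothesis $r\neq r_{\theta}$.

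The main obstacle is the first step, the fifteen-case cross-ratio enumeration, which is routine but lengthy and requires systematic use of \emph{all three} exclusions ($r\neq 1$, $e^{i\theta}\notin\{\pm 1\}$, and $e^{i\theta}\notin\{\pm i\}$). The third step is conceptually easy but hinges on the algebraic coincidence that, for each ``rotation-type'' transformation, the two conditions $T(1)=-r^{2}$ and $T(-r^{2})=1$ collapse to the same quadratic in $a$, so that the condition $r\neq r_{\theta}$ emerges naturally as the real relation cut out by that quadratic.
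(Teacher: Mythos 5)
Your proposal is correct, and it routes the same cross-ratio machinery differently from the paper. The paper first classifies the $4$-subsets of $\{\infty,0,1,-r^{2},a,-a\}$ lying on a generalized circle (exactly three), then shows their cross-ratios $-r^{2}$, $-1$ and $C_{\theta}(-r^{2})$ are pairwise non-equivalent under ${\mathbb G}$ --- this is where $r\neq r_{\theta}$ is spent, via the polynomials $r^{2}\pm 2\cos(\theta)r-1$ --- so that any $T$ permuting the six points must preserve each of $\{\infty,0\}$, $\{1,-r^{2}\}$, $\{a,-a\}$, leaving only four explicit candidates, each eliminated by inspecting the modulus and argument of $T(a)$. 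You instead establish the weaker (and easier) statement that $\{\infty,0,a,-a\}$ is the unique \emph{harmonic} quadruple --- which is indeed true using only $r>1$ and $e^{i\theta}\notin\{\pm1,\pm i\}$, the other fourteen harmonicity conditions forcing $a\in{\mathbb R}$, $|a|\leq 1$ or $\cos\theta=0$ --- and then confine $T$ to the explicit order-eight dihedral stabilizer of that quadruple, reserving $r\neq r_{\theta}$ for the final elimination of the rotation-type elements. This trades the paper's concyclicity classification for a longer terminal case check (eight maps instead of four), and it has the virtue of making visible why $r=r_{\theta}$ is genuinely exceptional: the stabilizer element that survives at $r=r_{\theta}$ is precisely the extra involution $z\mapsto \mu\left(\frac{z+\mu}{z-\mu}\right)$ that the paper exhibits later when discussing that case. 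Your reading of the hypothesis $r\neq r_{\theta}$ as excluding the root greater than $1$ of $r^{2}\pm2r\cos(\theta)-1$ is the intended one.

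One caveat on your third step: the advertised coincidence does not hold for all four rotation-type maps. For $T(z)=a(a-z)/(a+z)$ and $T(z)=-a(a+z)/(a-z)$ it is only the transposition $1\leftrightarrow -r^{2}$ whose two equations collapse to a single real quadratic, namely $a^{2}-(1-r^{2})a+r^{2}=0$, respectively $a^{2}+(1-r^{2})a+r^{2}=0$, which compared with the minimal polynomial $a^{2}-2r\cos(\theta)a+r^{2}$ yield $r^{2}+2r\cos(\theta)-1=0$, respectively $r^{2}-2r\cos(\theta)-1=0$; the fixed-point alternative for these two maps, and both alternatives for the remaining two maps, produce either real-rooted quadratics or the condition $a^{2}=-1$, all excluded outright since $a\notin{\mathbb R}$ and $|a|=r>1$. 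So the elimination goes through --- in fact more easily than you anticipated --- but the write-up should treat those subcases separately rather than invoking a uniform ``same quadratic'' collapse.
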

\begin{proof}
By direct inspection at the cross-ratios, with the extra restriction that $e^{i\theta} \notin \{\pm 1, \pm i\}$, we may notice that the only subsets of cardinality $4$ of the set 
$$\{\infty, 0, 1, -r^{2}, re^{i\theta}, -re^{i\theta}\}$$
contained in a generalized circle are given by
$$\{\infty,0,1,-r^{2}\}, \quad \{\infty,0,re^{i\theta}, -re^{i\theta}\}, \quad \{1,-r^{2}, re^{i\theta}, -re^{i\theta} \}.$$

Their corresponding cross-ratios  are given by
$$[\infty,0,1,-r^{2}]=-r^{2} \notin \{-1,1/2,2\}$$
$$[\infty,0,re^{i\theta}, -re^{i\theta}]=-1$$
$$[1,-r^{2},re^{i\theta}, -re^{i\theta}]=C_{\theta}(-r^2)=-\frac{r^{4}+2(2\sin(\theta)^{2}-1)r^2+1)}{(r^{2}+2\cos(\theta) r+1)^{2}} \notin \{-1,1/2,2\}$$

Now,  for $\theta$ fixed and each $R \in {\mathbb G}$, the equation $C_{\theta}(-r^{2})=R(-r^{2})$ determines a non-constant real polynomial, of degree either $2$ or $4$, for which $r$ is a zero. These polynomials are the following ones:
$$r^{2}+2\cos(\theta)r-1; \; r^{2}-2\cos(\theta)r-1; \; 2r^{4}+3r^{2}-2\cos(\theta)r+1;$$
$$2r^{4}+3r^{2}+2\cos(\theta)r+1; \; r^{4}+2\cos(\theta)r^{3}+3r^{2}+2; \; r^{4}-2\cos(\theta)r^{3}+3r^{2}+2.$$

Each one of the degree four polynomials has no real zeroes greater than $1$. Both degree two polynomials have zeroes, respectively, at the points $r_{1}(\theta)$ and $r_{2}(\theta)$.  
The condition that $r \neq r_{\theta}$ then asserts that the cross-ratios of the three collections of $4$-points as above are non-equivalent under the action of ${\mathbb G}$.  In particular,  if $T$ is a M\"obius transformation so that 
$$\{\infty,0,1,-r^{2},re^{i\theta}, -re^{i\theta}\} \stackrel{T}{\mapsto}\{\infty,0,1,-r^{2},re^{i\theta}, -re^{i\theta}\},$$
then necessarily 
$$\{\infty,0,1,-r^{2}\} \stackrel{T}{\mapsto} \{\infty,0,1,-r^{2}\}, \quad \{\infty,0,re^{i\theta}, -re^{i\theta}\} \stackrel{T}{\mapsto} \{\infty,0,re^{i\theta}, -re^{i\theta}\}.$$

We now have only two possibilities.
\smallskip
\begin{enumerate}
\item $T(\infty)=\infty$, $T(0)=0$ and either
\begin{enumerate}
\item $T(1)=1$, in which case $T(z)=z$; or 
\item $T(1)=-r^{2}$, in which case $T(z)=-r^{2} z$. But in this case, $T(re^{i\theta})=-r^{3}e^{i\theta}$ should be equal to $\pm re^{i\theta}$, that is, that $r^{2}=\pm 1$, a contradiction as $r>1$.
\end{enumerate}
\smallskip
\item $T(\infty)=0$, $T(0)=\infty$ and either
\begin{enumerate}
\item $T(1)=1$, in which case $T(z)=1/z$. But in this situation, we must have  that  $T(re^{i\theta})=r^{-1}e^{-i\theta}$ is equal to $\pm re^{i\theta}$, that is, $r^{2}e^{2i\theta}=\pm 1$, again a  contradiction as $r >1$.
\item $T(1)=-r^{2}$, in which case $T(z)=-r^{2}/z$. In this situation, we must have  that  $T(re^{i\theta})=-re^{-i\theta}$ is equal to $\pm re^{i\theta}$, that is, $e^{2i\theta}=\pm 1$, a contradiction as $e^{i\theta} \notin {\mathbb R} \cup\{ \pm i\}$.

\end{enumerate}
\end{enumerate}

If $A \in {\rm Aut}(C^{(k)}_{r,\theta}) \setminus H$, then (by the uniqueness of $H$), it will induce a M\"obius transformation $T$ as above, a contradiction. So, ${\rm Aut}(C^{(k)}_{r,\theta})=H$.
\end{proof}

\begin{lemm}\label{anticonformal}
If $\theta$ is so that $e^{i\theta} \notin \{\pm 1, \pm i\}$, and $r>1$ is so that $r \neq r_{\theta}$, then $C^{(k)}_{r,\theta}$ admits anticonformal automorphisms (so, ${\mathcal M}(C^{(k)}_{r,\theta})<{\mathbb R}$) and all of them are of the form
$$
Q([x_1:x_2:x_3:x_4:x_5:x_6])=
[\overline{x_2}:\alpha_{2}\overline{x_1}:\alpha_{3}\overline{x_4}:\alpha_{4}\overline{x_3}:\alpha_{5}\overline{x_6}:\alpha_{6}\overline{x_5}],
$$
where
$\alpha_{2}^{k}=\alpha_{4}^{k}=-r^{2}, \; \alpha_{3}^{k}=1, \; \alpha_{5}^{k}=re^{i\theta}=-\alpha_{6}^{k}.$
In particular, if $k \geq 2$ is even, then $C^{(k)}_{r,\theta}$ is pseudo-real.
\end{lemm}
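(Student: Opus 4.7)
The plan is to first reduce the problem to $\widehat{\mathbb C}$ via $\pi$ and then lift back to $C^{(k)}_{r,\theta}$. By Lemma \ref{trivialgroup}, the full conformal automorphism group is $H$, so any anticonformal automorphism $Q$ normalizes $H$ and therefore descends through $\pi$ to an anticonformal self-map $T$ of $\widehat{\mathbb C}$, i.e.\ $T=M\circ\iota$ where $\iota(z)=\bar z$ and $M$ is M\"obius, which preserves the branch set $B=\{\infty,0,1,-r^{2},re^{i\theta},-re^{i\theta}\}$. I would classify such $M$ by running the same cross-ratio analysis as in Lemma \ref{trivialgroup} applied to $\iota(B)$: the three distinguished concyclic $4$-subsets of $B$ have cross-ratios $-r^{2}$, $-1$, and $C_{\theta}(-r^{2})$, all invariant under $\theta\mapsto-\theta$, so under the same hypotheses $e^{i\theta}\notin\{\pm1,\pm i\}$ and $r\neq r_{\theta}$ the corresponding subsets of $\iota(B)$ remain pairwise non-${\mathbb G}$-equivalent. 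Hence $M$ must match these subsets with their counterparts, forcing
\[
\{\infty,0\}\mapsto\{\infty,0\},\quad \{1,-r^{2}\}\mapsto\{1,-r^{2}\},\quad \{re^{-i\theta},-re^{-i\theta}\}\mapsto\{re^{i\theta},-re^{i\theta}\}.
\]
A brief case split on $M(\infty)$ and $M(1)$, using $r>1$ and $e^{i\theta}\notin\{\pm1,\pm i\}$, eliminates every candidate except $M(z)=-r^{2}/z$, so $T(z)=-r^{2}/\bar z$; in particular an anticonformal automorphism exists and therefore ${\mathcal M}(C^{(k)}_{r,\theta})\subset{\mathbb R}$.

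Next I would identify the branch fibers of $\pi$ as the coordinate hyperplanes: setting $x_{j}=0$ in the defining equations shows that $\pi^{-1}(\infty)=\{x_{1}=0\}$, $\pi^{-1}(0)=\{x_{2}=0\}$, and $\pi^{-1}(1),\pi^{-1}(-r^{2}),\pi^{-1}(re^{i\theta}),\pi^{-1}(-re^{i\theta})$ are cut out by $x_{3}=0,x_{4}=0,x_{5}=0,x_{6}=0$ respectively. Since $T$ permutes the branch values by the involution $(1\,2)(3\,4)(5\,6)$ on the index set, $Q$ must permute these hyperplanes accordingly and, normalizing the projective scaling so the first coordinate has coefficient $1$, must take the form
\[
Q([x_{1}:\cdots:x_{6}])=[\overline{x_{2}}:\alpha_{2}\overline{x_{1}}:\alpha_{3}\overline{x_{4}}:\alpha_{4}\overline{x_{3}}:\alpha_{5}\overline{x_{6}}:\alpha_{6}\overline{x_{5}}].
\]
Substituting this ansatz into each of the four defining polynomials and using the equations themselves to eliminate one coordinate per relation yields precisely $\alpha_{2}^{k}=-r^{2}$, $\alpha_{3}^{k}=1$, $\alpha_{4}^{k}=-r^{2}$, $\alpha_{5}^{k}=re^{i\theta}$, $\alpha_{6}^{k}=-re^{i\theta}$; conversely, any choice of $k$-th roots satisfying these constraints produces a genuine anticonformal automorphism of $C^{(k)}_{r,\theta}$.

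For the pseudo-real claim when $k$ is even, I would compute $Q^{2}$ directly, obtaining
\[
Q^{2}([x_{1}:\cdots:x_{6}])=[\overline{\alpha_{2}}x_{1}:\alpha_{2}x_{2}:\alpha_{3}\overline{\alpha_{4}}x_{3}:\alpha_{4}\overline{\alpha_{3}}x_{4}:\alpha_{5}\overline{\alpha_{6}}x_{5}:\alpha_{6}\overline{\alpha_{5}}x_{6}].
\]
For $Q^{2}$ to be the identity in ${\mathbb P}^{5}$ all six scalars must coincide, and in particular $\overline{\alpha_{2}}=\alpha_{2}$, forcing $\alpha_{2}\in{\mathbb R}$. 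However, $\alpha_{2}^{k}=-r^{2}<0$ admits no real $k$-th root when $k$ is even, so $Q^{2}\neq\mathrm{id}$; since every anticonformal automorphism is of the classified form, none is an involution, and hence $C^{(k)}_{r,\theta}$ is pseudo-real.

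The main obstacle I anticipate is the case analysis classifying $M$ in the first step: although it parallels Lemma \ref{trivialgroup}, one must verify that the three special cross-ratios really are preserved under $\theta\mapsto-\theta$ (so that the hypothesis $r\neq r_{\theta}$ still forces distinct ${\mathbb G}$-orbits after conjugation) and then rule out each candidate $M$ using $r>1$ and $e^{i\theta}\notin\{\pm1,\pm i\}$. Once $T$ is pinned down, both the lifting argument (forced by the $H$-equivariance and the action on branch fibers) and the pseudo-real computation become essentially mechanical.
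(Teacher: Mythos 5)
Your argument is correct and follows essentially the same route as the paper: identify $T(z)=-r^{2}/\overline{z}$ as the unique extended M\"obius transformation preserving the branch locus (by the same cross-ratio analysis as in Lemma \ref{trivialgroup}, noting the data only depend on $\cos\theta$), deduce the anti-linear coordinate form of $Q$, obtain $\alpha_{2}^{k}=\alpha_{4}^{k}=-r^{2}$, $\alpha_{3}^{k}=1$, $\alpha_{5}^{k}=re^{i\theta}=-\alpha_{6}^{k}$ by substitution, and exclude $Q^{2}=I$ for even $k$ because $\alpha_{2}$ would have to be real with $\alpha_{2}^{k}=-r^{2}<0$. The only thin spot is the passage from ``$Q$ permutes the coordinate hyperplanes'' to the explicit anti-linear form on ${\mathbb P}^{5}$, which the paper justifies by the arguments of \cite[Corollary 9]{GHL} (via the function-theoretic description of the $x_{j}$) rather than by the divisor permutation alone; otherwise the two proofs coincide.
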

\begin{proof}
It can be observed that the only extended M\"obius transformation keeping invariant the collection $\{\infty,0,1,-r^{2},re^{i\theta},-re^{i\theta}\}$ is $T(z)=-r^{2}/\overline{z}$.
So, following the same arguments as in the proof of \cite[Corollary 9]{GHL}, one may observe that every anticonformal automorphism of $C^{(k)}_{r,\theta}$ must have the form
{\small
$$
Q([x_1:x_2:x_3:x_4:x_5:x_6])=
[\overline{x_2}:\alpha_{2}\overline{x_1}:\alpha_{3}\overline{x_4}:\alpha_{4}\overline{x_3}:\alpha_{5}\overline{x_6}:\alpha_{6}\overline{x_5}]=[y_{1}:y_{2}:y_{3}:y_{4}:y_{5}:y_{6}].
$$
}
Now, the condition $y_{1}^{k}+y_{2}^{k}+y_{3}^{k}=0$ is equivalent to have
$\overline{\alpha_{2}^{k}} x_{1}^{k}+x_{2}^{k}+\overline{\alpha_{3}^{k}} x_{4}^{k}=0$, from which we must have $\alpha_{3}^{k}=1$ and $\alpha_{2}^{k}=-r^{2}$. Now, the condition $-r^{2}y_{1}^{k}+y_{2}^{k}+y_{4}^{k}=0$ is equivalent to have $\alpha_{4}^{k}=-r^{2}$.  The condition $r e^{i\theta}y_{1}^{k}+y_{2}^{k}+y_{5}^{k}=0$ is equivalent to have $\alpha_{5}^{k}=r e^{i\theta}$. The condition $-r e^{i\theta}y_{1}^{k}+y_{2}^{k}+y_{6}^{k}=0$ is equivalent to have $\alpha_{6}^{k}=-r e^{i\theta}$.
Let us observe that 
$$Q^{2}[x_{1}:x_{2}:x_{3}:x_{4}:x_{5}:x_{6}]=[\overline{\alpha_{2}} x_{1}:\alpha_{2} x_{2}: \alpha_{3} \overline{\alpha_{4}} x_{3}: \alpha_{4} \overline{\alpha_{3}} x_{4}: \alpha_{5} \overline{\alpha_{6}} x_{5}: \alpha_{6} \overline{\alpha_{5}} x_{6}].$$
So, for having $Q^{2}=I$, we need to have that $\alpha_{2} \in {\mathbb R}$ and  $\alpha_{2}^{k}=-r^{2}<0$. This is impossible for $k \geq 2$ even.
\end{proof}

\begin{ex}
Each curve $C^{(2)}_{r,\theta}$ admits the following anticonformal automorphism of order $4$
\begin{equation}\label{orden4}
\widehat{\eta}([x_1:x_2:x_3:x_4:x_5:x_6])=
[\overline{x_2}:ir\;\overline{x_1}:\overline{x_4}:ir\;\overline{x_3}:\sqrt{r}\;e^{i\theta/2}\overline{x_6}:i\sqrt{r}\;e^{i\theta/2}\overline{x_5}].
\end{equation}
\end{ex}

Summarizing all the above is the following result, which generalizes the one obtained in \cite{Hidalgo}.

\begin{theo}\label{teo00}
Let $\theta$ be so that $e^{i\theta} \notin \{\pm 1, \pm i\}$, let $r>1$ be so that $r \neq r_{\theta}$ and $k \geq 2$ even.
Then $C^{(k)}_{r,\theta}$  is a pseudo-real non-hyperelliptic Riemann surface of genus $g=1+(2k-3)k^{4}$ with ${\rm Aut}(C_{r,\theta})=H$. So, its field of moduli is a subfield of ${\mathbb R}$ and it cannot be defined over ${\mathbb R}$ (in particular, it is not definable over its field of moduli).
\end{theo}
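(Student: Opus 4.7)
The plan is to assemble Theorem \ref{teo00} directly from the preceding lemmas, with essentially no new content beyond verifying that each clause follows cleanly. First I would dispose of the topological/algebraic invariants: non-hyperellipticity of $C^{(k)}_{r,\theta}$ and the genus formula $g = 1 + (2k-3)k^4$ are already noted at the beginning of Section \ref{Sec:GFC} for every generalized Fermat curve of type $(k,5)$, the former by \cite{GHL} and the latter by a Riemann--Hurwitz computation applied to the branched cover $\pi:S \to \widehat{\mathbb C}$ with deck group $H \cong {\mathbb Z}_k^5$ and six cone points of order $k$. The hypothesis $e^{i\theta} \notin \{\pm 1, \pm i\}$ and $r > 1$, $r \neq r_\theta$ is precisely the hypothesis of Lemma \ref{trivialgroup}, which immediately gives ${\rm Aut}(C^{(k)}_{r,\theta}) = H$.

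Next I would establish that ${\mathcal M}(C^{(k)}_{r,\theta}) \subset {\mathbb R}$. By Lemma \ref{anticonformal}, under the same hypotheses the curve admits at least one anticonformal automorphism $Q$ (any choice of the $\alpha_j$ satisfying the stated $k$-th power equations produces one). The existence of an anticonformal automorphism is equivalent to the complex conjugation $\sigma_0 \in {\rm Aut}({\mathbb C}/{\mathbb Q})$ belonging to $G_{C^{(k)}_{r,\theta}}$: indeed, $C^{(k)}_{r,\theta}$ and its complex conjugate curve $(C^{(k)}_{r,\theta})^{\sigma_0}$ are conformally equivalent precisely when some antiholomorphic bijection $C^{(k)}_{r,\theta} \to C^{(k)}_{r,\theta}$ exists. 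Hence $\sigma_0$ fixes every element of the field of moduli, i.e.\ ${\mathcal M}(C^{(k)}_{r,\theta}) \subset {\rm Fix}(\sigma_0) = {\mathbb R}$.

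Now I would show that $C^{(k)}_{r,\theta}$ cannot be defined over ${\mathbb R}$ when $k \geq 2$ is even. By Silhol's criterion \cite{Silhol} cited in the introduction, definability over ${\mathbb R}$ is equivalent to the existence of an anticonformal \emph{involution}. Lemma \ref{anticonformal} already rules this out: every anticonformal $Q$ has $Q^2 = I$ forcing $\alpha_2 \in {\mathbb R}$ with $\alpha_2^k = -r^2 < 0$, which is impossible for even $k$. Combining the two previous paragraphs, the curve is pseudo-real by definition. Finally, since ${\mathcal M}(C^{(k)}_{r,\theta}) \subset {\mathbb R}$, if the field of moduli were a field of definition then ${\mathbb R}$ would a fortiori be one, contradicting the previous step.

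I do not anticipate a genuine obstacle here: the theorem is a summary statement, and the real work has been carried out in Lemmas \ref{trivialgroup} and \ref{anticonformal}. The only point that requires a word of care is the equivalence ``anticonformal automorphism $\Longleftrightarrow$ complex conjugation stabilizes the moduli point,'' which I would either state as standard or briefly justify by noting that for any algebraic model defined by polynomials with complex coefficients, $C^{\sigma_0}$ is conformally equivalent to $\overline{C}$, and a conformal equivalence $C \to \overline{C}$ is the same datum as an antiholomorphic self-map of $C$.
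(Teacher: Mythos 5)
Your proposal is correct and matches the paper's own treatment: Theorem \ref{teo00} is explicitly presented there as a summary of Lemma \ref{trivialgroup} and Lemma \ref{anticonformal} (plus the genus and non-hyperellipticity facts from Section \ref{Sec:GFC}), with no further argument needed. Your assembly of these ingredients, including the Silhol criterion and the observation that a real field of moduli plus non-definability over ${\mathbb R}$ rules out definability over the field of moduli, is exactly the intended proof.
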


\begin{rem}
The statement provided above is slightly different than the one provided in \cite{Hidalgo,Hidalgo2}, where it was assumed $k=2$. In \cite{Hidalgo}, it was missing the restriction that $r \neq r_{\theta}$ (it was corrected in \cite{Hidalgo2}). 
\end{rem}

\section{Computing the field of moduli of $C^{(k)}_{r,\theta}$}\label{Sec:computation}
The arguments, provided in the above section, 
 only permitted to see that the field of moduli of a curve $C^{(k)}_{r,\theta}$ was a subfield of ${\mathbb R}$ and that the curve was not definable over ${\mathbb R}$. It did not show how to compute explicitly the field of moduli and neither a minimal field of definition. In this section, we proceed to compute it explicitly.

\subsection{A preliminary result}
In Lemma \ref{lemita1}, we have observed a necessary and sufficient condition for $\sigma \in {\rm Aut}({\mathbb C}/{\mathbb Q})$ to be in ${\rm Aut}({\mathbb C}/{\mathcal M}(C^{(k)}(\lambda_{1},\lambda_{2},\lambda_{3})))$, for each $(\lambda_{1},\lambda_{2},\lambda_{3}) \in \Omega$. In the particular case $\lambda_{1}=-r^{2}$, $\lambda_{2}=re^{i\theta}$ and $\lambda_{3}=-re^{i\theta}$, where $e^{i\theta} \notin \{ \pm 1, \pm i\}$, $r>1$, $r \neq r_{\theta}$ and $k \geq 2$ even, we make this description more explicitly.

\begin{prop}\label{teo0}
Let $\theta$ be so that $e^{i\theta} \notin \{ \pm 1, \pm i\}$, let $r>1$ be so that $r \neq r_{\theta}$ and $k \geq 2$ even. Set $\lambda=-r^{2}$ and 
$\mu=re^{i \theta}$. 
Then, 
$\sigma \in {\rm Aut}({\mathbb C}/{\mathbb Q})$ belongs to ${\rm Aut}({\mathbb C}/{\mathcal M}(C^{(k)}_{r,\theta}))$ if and only if any of the situations shown in Table \ref{tabla} holds for $\sigma(\lambda)$ and $\sigma(\mu)$. The third column provides the unique M\"obius transformation $T_{\sigma}$ sending the set $\{\infty, 0, 1, \lambda, \mu, -\mu\}$ onto the set $\{\infty, 0, 1, \sigma(\lambda), \sigma(\mu), -\sigma(\mu)\}$.
\end{prop}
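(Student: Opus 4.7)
The plan is to apply Lemma~\ref{lemita1} to the tuple $(\lambda_1,\lambda_2,\lambda_3)=(\lambda,\mu,-\mu)$: the condition $\sigma\in\mathrm{Aut}(\mathbb{C}/\mathcal{M}(C^{(k)}_{r,\theta}))$ translates into the existence of a M\"obius transformation $T_\sigma$ sending $\Lambda:=\{\infty,0,1,\lambda,\mu,-\mu\}$ onto $\Lambda^\sigma:=\{\infty,0,1,\sigma(\lambda),\sigma(\mu),-\sigma(\mu)\}$ (here $\sigma$ fixes $\infty,0,1$ because it fixes $\mathbb{Q}$). The proposition therefore reduces to classifying all such $T_\sigma$ and, for each of them, reading off the equations on $\sigma(\lambda),\sigma(\mu)$ that the classification forces.

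The combinatorial engine is the cross-ratio analysis already carried out in the proof of Lemma~\ref{trivialgroup}. Under the standing hypotheses, the only $4$-subsets of $\Lambda$ lying on a generalized circle are
$$\{\infty,0,1,\lambda\},\quad \{\infty,0,\mu,-\mu\},\quad \{1,\lambda,\mu,-\mu\},$$
with cross-ratios $\lambda$, $-1$ and $C_\theta(\lambda)$ respectively, and these three values lie in three distinct $\mathbb{G}$-orbits. Since a M\"obius transformation preserves both generalized circles and the $\mathbb{G}$-orbits of cross-ratios, $T_\sigma$ induces a bijection between the three special $4$-subsets of $\Lambda$ and those of $\Lambda^\sigma$, respecting $\mathbb{G}$-orbits. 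In particular the $4$-subset of $\Lambda^\sigma$ with cross-ratio equivalent to $-1$ is $\{\infty,0,\sigma(\mu),-\sigma(\mu)\}$, so $T_\sigma$ must carry $\{\infty,0,\mu,-\mu\}$ onto this set, and carry the circle through $\{\infty,0,1,\lambda\}$ to the circle through $\{\infty,0,1,\sigma(\lambda)\}$.

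From here the enumeration is mechanical. I would choose $T_\sigma^{-1}(\{\infty,0\})$ among the $2$-subsets of $\Lambda$ allowed by the previous paragraph (essentially $\{\infty,0\}$, $\{\mu,-\mu\}$, and the compatible pairs inside $\{1,\lambda,\mu,-\mu\}$), then pick $T_\sigma^{-1}(1)$ among the remaining four points, then write $T_\sigma$ explicitly using the fact that a M\"obius map is determined by three point-images, and finally impose that the images of the three leftover points of $\Lambda$ are exactly $\sigma(\lambda),\sigma(\mu),-\sigma(\mu)$ in some order. Each surviving case produces one row of Table~\ref{tabla}: the explicit formula for $T_\sigma$ and the resulting relations between $(\sigma(\lambda),\sigma(\mu))$ and $(\lambda,\mu)$.

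The main obstacle is purely organizational, namely controlling a finite but not-so-small case split: one must discard the many tentative $T_\sigma$ that fail the constraint $T_\sigma(\{\mu,-\mu\})=\{\sigma(\mu),-\sigma(\mu)\}$, or that yield the contradictions $r^2=\pm 1$ or $e^{2i\theta}=\pm 1$ already encountered in the proof of Lemma~\ref{trivialgroup} (these are exactly the arithmetic identities excluded by the hypotheses $r>1$, $r\neq r_\theta$, $e^{i\theta}\notin\{\pm 1,\pm i\}$). The uniqueness assertion of $T_\sigma$ within each row of the table is then automatic: three point-images determine a M\"obius transformation, and within a fixed row the pairing of the remaining three points is forced by the cross-ratio and generalized-circle invariants recorded above.
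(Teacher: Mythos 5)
Your proposal follows essentially the same route as the paper's own proof: reduce via Lemma~\ref{lemita1} to the existence of a M\"obius transformation between the two sets of six branch points, invoke the generalized-circle/cross-ratio analysis from the proof of Lemma~\ref{trivialgroup} to force $T_{\sigma}(\{\infty,0,\mu,-\mu\})=\{\infty,0,\sigma(\mu),-\sigma(\mu)\}$ and $T_{\sigma}(\{1,\lambda\})=\{1,\sigma(\lambda)\}$, and then enumerate the finitely many possible $T_{\sigma}$ to read off the twelve rows of Table~\ref{tabla}. One small simplification you could make explicit: since the cross-ratio $-1$ is attained only by orderings that keep the pairs $\{\infty,0\}$ and $\{\mu,-\mu\}$ intact (end of Section~\ref{cross-ratio}), the preimage $T_{\sigma}^{-1}(\{\infty,0\})$ is automatically $\{\infty,0\}$ or $\{\mu,-\mu\}$, so the ``compatible pairs inside $\{1,\lambda,\mu,-\mu\}$'' never arise and the case split is exactly the paper's cases (1) and (2).
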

{\small
\begin{equation}\label{tabla}
\begin{tabular}{|c|c|c|c|}\hline
Case & $\sigma(\lambda)$ & $\sigma(\mu)$ & $T_{\sigma}(z)$ \\\hline 
(1) & $\lambda$ & $\pm \mu$ &  $z$\\\hline
(2) & $1/\lambda$ & $\pm \mu/\lambda$ & $\sigma(\lambda)z$\\\hline
(3) & $1/\lambda$ & $\pm 1/\mu$ & $1/z$\\\hline
(4) & $\lambda$ & $\pm \lambda/\mu$ & $\sigma(\lambda)/z$\\\hline
(5) & $\left(\dfrac{1-\mu}{1+\mu}\right) \left(\dfrac{\lambda+\mu}{\lambda-\mu}\right)$ & $\dfrac{1-\mu}{1+\mu}$ & $\sigma(\mu) \left( \dfrac{z+\mu}{z-\mu}\right)$\\\hline
(6) & $\left(\dfrac{1+\mu}{1-\mu}\right) \left(\dfrac{\lambda-\mu}{\lambda+\mu}\right)$ & $\dfrac{\lambda-\mu}{\lambda+\mu}$ & $\sigma(\mu) \left( \dfrac{z+\mu}{z-\mu}\right)$\\\hline
(7) & $\left(\dfrac{1-\mu}{1+\mu}\right) \left(\dfrac{\lambda+\mu}{\lambda-\mu}\right)$ & $\dfrac{\mu-1}{1+\mu}$ & $-\sigma(\mu) \left( \dfrac{z+\mu}{z-\mu}\right)$\\\hline
(8) & $\left(\dfrac{1+\mu}{1-\mu}\right) \left(\dfrac{\lambda-\mu}{\lambda+\mu}\right)$ & $\dfrac{\mu-\lambda}{\mu+\lambda}$ & $-\sigma(\mu) \left( \dfrac{z+\mu}{z-\mu}\right)$\\\hline
(9) & $\left(\dfrac{1+\mu}{\mu-1}\right) \left(\dfrac{\lambda-\mu}{\lambda+\mu}\right)$ & $\dfrac{1+\mu}{\mu-1}$ & $\sigma(\mu) \left( \dfrac{z-\mu}{z+\mu}\right)$\\\hline
(10) & $\left(\dfrac{1-\mu}{1+\mu}\right) \left(\dfrac{\lambda+\mu}{\lambda-\mu}\right)$ & $\dfrac{\lambda+\mu}{\lambda-\mu}$ & $\sigma(\mu) \left( \dfrac{z-\mu}{z+\mu}\right)$\\\hline
(11) & $\left(\dfrac{1+\mu}{1-\mu}\right) \left(\dfrac{\lambda-\mu}{\lambda+\mu}\right)$ & $\dfrac{\mu+1}{\mu-1}$ &$-\sigma(\mu) \left( \dfrac{z-\mu}{z+\mu}\right)$\\\hline
(12) & $\left(\dfrac{1-\mu}{1+\mu}\right) \left(\dfrac{\lambda+\mu}{\lambda-\mu}\right)$ & $\dfrac{\mu+\lambda}{\mu-\lambda}$ & $-\sigma(\mu) \left( \dfrac{z-\mu}{z+\mu}\right)$\\\hline
\end{tabular}
\end{equation}
}

The proof of the above result will be provided in Section \ref{Sec:pruebapropo}.

\begin{rem}
\begin{enumerate}
\item In cases (5)-(12), in the above table, one has that $\sigma(\lambda) \notin {\mathbb R}$ since 
$${\rm Im}\left[ \left( \dfrac{1-\mu}{1+\mu} \right) \left( \dfrac{\lambda+\mu}{\lambda-\mu}\right)\right]=\dfrac{2\lambda(\lambda+1)\sin(\theta)\left( \sqrt{-\lambda}-1\right)}{|1+\mu|^{2} |\lambda-\mu|^{2}} \neq 0.$$

\item If there are  $\sigma_{1}, \sigma_{2} \in {\rm Aut}({\mathbb C}/{\mathcal M}(C^{(k)}_{r,\theta}))$ so that $\sigma_{1}$ satisfies case (2) and $\sigma_{2}$ satisfies case (3), then $\sigma_{1}\sigma_{2}$ satisfies case (4).

\item If there are  $\sigma_{1}, \sigma_{2} \in {\rm Aut}({\mathbb C}/{\mathcal M}(C^{(k)}_{r,\theta}))$ so that $\sigma_{1}$ satisfies case (2) and $\sigma_{2}$ satisfies case (4), then $\sigma_{1}\sigma_{2}$ satisfies case (3).

\item If there are  $\sigma_{1}, \sigma_{2} \in {\rm Aut}({\mathbb C}/{\mathcal M}(C^{(k)}_{r,\theta}))$ so that $\sigma_{1}$ satisfies case (3) and $\sigma_{2}$ satisfies case (4), then $\sigma_{1}\sigma_{2}$ satisfies case (2).

\end{enumerate}
\end{rem}

\subsection{Explicit computation of the field of moduli}
For each pair $\nu, \eta \in {\mathbb C}-\{0,1\}$, $\nu \neq \pm\eta$, and $k \geq 2$ even, we will set $D_{\nu,\eta}=C^{(k)}(\nu,\eta,-\eta)$.
Note that $C^{(k)}_{r,\theta}=D_{-r^{2},re^{i\theta}}$.

\begin{theo}\label{teo1}
Let $\theta$ be so that $e^{i\theta} \notin \{\pm 1, \pm i\}$, let $r>1$ be so that $r \neq r_{\theta}$, and let $k \geq 2$ even.
If $r^{4} \in {\mathbb Q}$ and there is no $\sigma \in {\rm Aut}({\mathbb C}/{\mathbb Q})$ so that $\sigma(re^{i\theta})=-re^{i\theta}$ (in particular, that $e^{i\theta}$ is algebraic), then 
 ${\mathcal M}(C^{(k)}_{r,\theta})={\mathbb Q}(r^{2},re^{i\theta}) \cap {\mathbb R}={\mathbb Q}(r^{2},r\cos(\theta))$. So ${\mathbb Q}(r^{2},re^{i\theta})$ is a minimal field of definition of $C^{(k)}_{r,\theta}$.
\end{theo}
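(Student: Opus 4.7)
Setting $\lambda=-r^{2}$ and $\mu=re^{i\theta}$, the plan is to determine the stabilizer $G_{C^{(k)}_{r,\theta}}$ using Proposition \ref{teo0}, then compute its fixed field. The hypothesis $r^{4}\in\mathbb{Q}$ forces every $\sigma\in\mathrm{Aut}(\mathbb{C}/\mathbb{Q})$ to send $r^{2}$ to a root of $x^{2}-r^{4}$, so $\sigma(r^{2})\in\{r^{2},-r^{2}\}$. I would use this to discard most rows of Table \ref{tabla}: in cases (5)--(12) the remark following Proposition \ref{teo0} shows $\sigma(\lambda)\notin\mathbb{R}$, whereas $\sigma(\lambda)=-\sigma(r^{2})\in\mathbb{R}$ gives a contradiction; cases (2) and (3) require $\sigma(r^{2})=1/r^{2}$, which combined with $\sigma(r^{2})=\pm r^{2}$ forces $r^{4}=\pm 1$, impossible since $r>1$. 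Thus only cases (1) and (4) can occur, and each entails $\sigma(r^{2})=r^{2}$.

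In case (1), $\sigma(\mu)=\pm\mu$; the hypothesis on the absence of $\sigma$ with $\sigma(re^{i\theta})=-re^{i\theta}$ forbids the minus sign, leaving $\sigma(\mu)=\mu$. In case (4), $\sigma(\mu)=\pm\lambda/\mu=\mp\bar{\mu}$. The key point is to eliminate $\sigma(\mu)=-\bar{\mu}$: if such $\sigma$ existed, then using $\mu\bar{\mu}=r^{2}$ and $\sigma(r^{2})=r^{2}$ one computes $\sigma(\bar{\mu})=r^{2}/\sigma(\mu)=-\mu$, so $\tau=\sigma\circ c\in\mathrm{Aut}(\mathbb{C}/\mathbb{Q})$ (with $c$ the complex conjugation) would satisfy $\tau(\mu)=\sigma(\bar{\mu})=-\mu$, contradicting the hypothesis. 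Hence case (4) forces $\sigma(\mu)=\bar{\mu}$, and
$$G_{C^{(k)}_{r,\theta}}=\{\sigma\in\mathrm{Aut}(\mathbb{C}/\mathbb{Q}) : \sigma(r^{2})=r^{2},\ \sigma(\mu)\in\{\mu,\bar{\mu}\}\}.$$

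Each such $\sigma$ permutes $\{\mu,\bar{\mu}\}$ and hence fixes $r\cos\theta=(\mu+\bar{\mu})/2$ as well as $r^{2}$, giving $\mathbb{Q}(r^{2},r\cos\theta)\subseteq\mathcal{M}(C^{(k)}_{r,\theta})$. Conversely, any $\sigma$ fixing $\mathbb{Q}(r^{2},r\cos\theta)$ sends $\mu$ to a root of $x^{2}-2r\cos\theta\, x+r^{2}$, so $\sigma(\mu)\in\{\mu,\bar{\mu}\}$ and $\sigma\in G_{C^{(k)}_{r,\theta}}$; this proves $\mathcal{M}(C^{(k)}_{r,\theta})=\mathbb{Q}(r^{2},r\cos\theta)$. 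The identification with $\mathbb{Q}(r^{2},re^{i\theta})\cap\mathbb{R}$ follows from the fact that $\mathbb{Q}(r^{2},re^{i\theta})$ is a quadratic extension of $\mathbb{Q}(r^{2},r\cos\theta)$ (with negative discriminant $-4r^{2}\sin^{2}\theta$) and non-real generator, so its intersection with $\mathbb{R}$ equals the base. The defining equations of $C^{(k)}_{r,\theta}$ displayed in Section \ref{Sec:GFC} then exhibit $\mathbb{Q}(r^{2},re^{i\theta})$ as a field of definition, of degree $2$ over $\mathcal{M}(C^{(k)}_{r,\theta})$; since $C^{(k)}_{r,\theta}$ is pseudo-real by Theorem \ref{teo00}, $\mathcal{M}(C^{(k)}_{r,\theta})$ is not itself a field of definition, so this degree is minimal. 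The main difficulty is the elimination of case (4) with $\sigma(\mu)=-\bar{\mu}$, for which the trick of composing with complex conjugation to produce a forbidden automorphism is essential.
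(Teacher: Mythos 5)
Your argument is correct and essentially the same as the paper's: both reduce, via Proposition \ref{teo0} together with $r^{4}\in{\mathbb Q}$ and the hypothesis excluding $\sigma(\mu)=-\mu$, to $\sigma(\lambda)=\lambda$ with $\sigma(\mu)\in\{\mu,\overline{\mu}\}$, and both deduce minimality of the quadratic field of definition ${\mathbb Q}(r^{2},re^{i\theta})$ from pseudo-reality. Your elimination of $\sigma(\mu)=-\overline{\mu}$ by composing with complex conjugation is precisely the paper's observation that such an element composed with a conjugation-type element would yield the forbidden $\sigma(\mu)=-\mu$; the only cosmetic difference is that you identify ${\mathcal M}(C^{(k)}_{r,\theta})$ by computing the fixed field directly through $\mu+\overline{\mu}=2r\cos\theta$, whereas the paper argues that ${\mathbb Q}(\lambda,\mu)$ has degree two over the field of moduli and intersects ${\mathbb R}$ in it.
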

\begin{proof}
Let $\theta$ be so that $e^{i\theta} \notin \{\pm 1, \pm i\}$ and  let $r>1$ be so that $r \neq r_{\theta}$. So, 
$C^{(k)}_{r,\theta}=D_{\lambda,\mu}$, where $\lambda=-r^{2}$ and $\mu=re^{i\theta}$.
As ${\mathbb Q}(\lambda,\mu)$ is a field of definition of $D_{\lambda,\mu}$ and its field of moduli is a real subfield, it follows that ${\mathcal M}(D_{\lambda,\mu})<{\mathbb Q}(\lambda,\mu) \cap {\mathbb R}={\mathbb Q}(\lambda,\sqrt{|\lambda|}\cos(\theta))={\mathbb Q}(r^{2},r\cos(\theta))$.

Since $\lambda^{2}=r^{4} \in {\mathbb Q}$, it follows that for each $\sigma \in {\rm Aut}({\mathbb C}/{\mathbb Q})$ it holds that $\sigma(\lambda)=\pm \lambda$. As $\lambda \in {\mathbb R}$, $\lambda<-1$, it follows (by Proposition \ref{teo0}) that $\sigma \in {\rm Aut}({\mathbb C}/{\mathcal M}(D_{\lambda,\mu}))$ if and only if any one of the following cases holds. 
 \begin{enumerate}
\item[(a)] $\sigma(\lambda)=\lambda$ and $\sigma(\mu)=\mu$.
\item[(b)] $\sigma(\lambda)=\lambda$ and $\sigma(\mu)=-\mu$.
\item[(c)] $\sigma(\lambda)=\lambda$ and $\sigma(\mu)=\lambda/\mu$. 
\item[(d)] $\sigma(\lambda)=\lambda$ and $\sigma(\mu)=-\lambda/\mu$. 
\end{enumerate}

As we are assuming that there is no $\sigma \in {\rm Aut}({\mathbb C}/{\mathbb Q})$ so that $\sigma(\mu)=-\mu$,  condition (b) above does not happen. Also, there are not two elements $\sigma_{1}, \sigma_{2}  \in {\rm Aut}({\mathbb C}/{\mathcal M}(D_{\lambda,\mu}))$ such that $\sigma_{1}$ satisfies (c) and $\sigma_{2}$ satisfies (d). In fact, if that happens, then $\sigma_{1}\sigma_{2} \in {\rm Aut}({\mathbb C}/{\mathcal M}(D_{\lambda,\mu}))$ satisfies (b), a contradiction.
Let us now consider the subgroup
$$H=\{ \sigma \in {\rm Aut}({\mathbb C}/{\mathcal M}(D_{\lambda,\mu})): \sigma(\mu)=\mu\}.$$

We have from (a) above that ${\mathbb Q}(\lambda,\mu)={\rm Fix}(H)$. 
Let $\sigma_{1}, \sigma_{2} \in {\rm Aut}({\mathbb C}/{\mathcal M}(D_{\lambda,\mu}))$ both satisfying (c) or both satisfying (d), then $\sigma_{1}\sigma_{2}(\lambda)=\lambda$ and $\sigma_{1}\sigma_{2}(\mu)=\mu$. In particular, 
 ${\mathbb Q}(\lambda,\mu)$ is an extension of ${\mathcal M}(C_{\lambda,\mu})$ of degree at most two. As ${\mathcal M}(D_{\lambda,\mu})<{\mathbb R}$ and ${\mathbb Q}(\lambda,\mu)$ is not contained in ${\mathbb R}$, we obtain that the extension is of degree two.
All the above asserts that ${\mathbb Q}(\lambda,\mu)$ is a minimal field of definition of $C^{(k)}_{r,\theta}=D_{\lambda,\mu}$ and that ${\mathcal M}(C^{(k)}_{r,\theta})={\mathbb Q}(\lambda,\mu) \cap {\mathbb R}={\mathbb Q}(r^{2},r\cos(\theta))$.
\end{proof}

We wonder if, in Theorem \ref{teo1}, the condition $r^{4} \in {\mathbb Q}$ can be deleted.

\begin{conj}
Let $\theta$ be so that $e^{i\theta} \notin \{\pm 1, \pm i\}$, let $r>1$ be so that $r \neq r_{\theta}$, and let $k \geq 2$ even.
If there is no
$\sigma \in {\rm Aut}({\mathbb C}/{\mathbb Q})$ so that $\sigma(re^{i\theta})=-re^{i\theta}$, then 
 ${\mathcal M}(C^{(k)}_{r,\theta})={\mathbb Q}(r^{2},re^{i\theta}) \cap {\mathbb R}={\mathbb Q}(r^{2},r\cos(\theta))$; so ${\mathbb Q}(r^{2},re^{i\theta})$ is a minimal field of definition for it.
\end{conj}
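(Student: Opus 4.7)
The plan is to replay the strategy of Theorem \ref{teo1} using the full list of twelve cases in Proposition \ref{teo0}. Setting $\lambda = -r^{2}$ and $\mu = re^{i\theta}$, I note that $\mathbb{Q}(\lambda,\mu)$ is a field of definition of $C^{(k)}_{r,\theta}$ and that Theorem \ref{teo00} yields ${\mathcal M}(C^{(k)}_{r,\theta}) \subseteq \mathbb{Q}(\lambda,\mu) \cap \mathbb{R} = \mathbb{Q}(r^{2}, r\cos\theta) =: F$. To establish equality, I would show that every $\sigma \in {\rm Aut}(\mathbb{C}/{\mathcal M}(C^{(k)}_{r,\theta}))$ acts trivially on $F$, which amounts to the pair of conditions $\sigma(\lambda)=\lambda$ and $\sigma(\mu)+\sigma(\bar{\mu}) = \mu+\bar{\mu}$. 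Using the relation $\bar{\mu}=-\lambda/\mu$, a direct inspection of the twelve rows in Proposition \ref{teo0} shows that only case (1) with $\sigma(\mu)=\mu$ and case (4) with $\sigma(\mu)=-\lambda/\mu$ satisfy both conditions; in every other row either $\sigma(\lambda)\neq\lambda$ or the real-part condition fails.

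First I would knock out two cases by direct use of the hypothesis. Case (1) with $\sigma(\mu)=-\mu$ is literally forbidden. Case (4) with $\sigma(\mu)=-\bar{\mu}$ is also forbidden: complex conjugation $\tau$ lies in $G_{C^{(k)}_{r,\theta}}$ (it realizes case (4) with $\sigma(\mu)=\bar\mu$), and then $\tau\sigma(\mu)=-\mu$ contradicts the hypothesis. Next I would run the analogous composition game on cases (2) and (3). Writing $(c)_\pm$ for the two sign choices available in each case, a short calculation using $\bar\mu=-\lambda/\mu$ gives
\[
(2)_{+}\circ(2)_{-} = (1)_{-},\quad (3)_{+}\circ(3)_{-}=(1)_{-},\quad (2)_{+}\circ(3)_{+}=(4)_{+},\quad (2)_{-}\circ(3)_{-}=(4)_{+},
\]
so any simultaneous realization in $G_{C^{(k)}_{r,\theta}}$ of one of these matched pairs would violate the hypothesis. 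A parallel treatment of the rows (5)--(12), using the explicit M\"obius transformations $T_{\sigma}$ in the third column, should reduce those rows to the same alternative.

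The main obstacle is that these arguments only exclude simultaneous occurrences of matched pairs, not isolated single cases. Worse still, $\tau$-conjugation shows that $(2)_{+}$ and $(3)_{-}$ automatically come together (one checks $\tau\sigma$ realizes $(3)_{-}$ whenever $\sigma$ realizes $(2)_{+}$), and similarly $(2)_{-}$ and $(3)_{+}$, so the residual possibility is that exactly one of the pairs $\{(2)_{+},(3)_{-}\}$ or $\{(2)_{-},(3)_{+}\}$ is realized in $G_{C^{(k)}_{r,\theta}}$. In Theorem \ref{teo1} this residual possibility is killed at the outset by $r^{4}\in\mathbb{Q}$, which forces $\sigma(\lambda)\in\{\pm\lambda\}$ and eliminates (2),(3),(5)--(12) wholesale. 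Without it one can genuinely produce such an isolated pair: for instance, taking $r=1+\sqrt{2}$ and $\theta=\pi/3$, the automorphism of $\mathbb{C}$ sending $\sqrt{2}\mapsto -\sqrt{2}$ and fixing $e^{i\pi/3}$ satisfies $\sigma(r)=-1/r$, hence $\sigma(\lambda)=1/\lambda$ and $\sigma(\mu)=\mu/\lambda$, realizing $(2)_{+}$, while $-e^{i\pi/3}$ has a different minimal polynomial from $e^{i\pi/3}$ and $-r$ is not a $\mathbb{Q}$-conjugate of $r$, so no automorphism of $\mathbb{C}$ sends $re^{i\theta}$ to $-re^{i\theta}$ and the hypothesis of the conjecture holds. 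Since $\sigma(r^{2})=1/r^{2}\neq r^{2}$, such a $\sigma$ would witness ${\mathcal M}(C^{(k)}_{r,\theta})\subsetneq F$. Thus the plan above does not close the gap as stated, and I expect the conjecture to require an additional arithmetic hypothesis on $r$ (some substitute for $r^{4}\in\mathbb{Q}$ ruling out $r^{2}$ being $\mathbb{Q}$-conjugate to $1/r^{2}$ unless $r^{4}=1$) in order to be provable.
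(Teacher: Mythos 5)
This statement is not proved in the paper at all: it is posed as an open Question (the author explicitly wonders whether the hypothesis $r^{4}\in{\mathbb Q}$ of Theorem \ref{teo1} can be deleted), so there is no proof of the author's to compare yours with. Your diagnosis of why the proof of Theorem \ref{teo1} does not extend is accurate: without $r^{4}\in{\mathbb Q}$ one cannot force $\sigma(\lambda)\in\{\pm\lambda\}$, the composition arguments with complex conjugation only exclude \emph{simultaneous} occurrences of matched cases, and the residual possibility that a single pair such as $\{(2)_{+},(3)_{-}\}$ occurs is exactly what can happen. Moreover, your example appears to settle the question in the negative. For $r=1+\sqrt{2}$, $\theta=\pi/3$ one has $e^{i\theta}\notin\{\pm 1,\pm i\}$ and $r\neq r_{\pi/3}$ (the excluded radius is $\sqrt{1+\cos^{2}(\pi/3)}+\cos(\pi/3)=(1+\sqrt{5})/2$; you should record this check, since it is a hypothesis of Proposition \ref{teo0}). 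The automorphism $\sigma$ with $\sqrt{2}\mapsto-\sqrt{2}$ and $e^{i\pi/3}\mapsto e^{i\pi/3}$ sends $(\lambda,\mu)$ to $(1/\lambda,\mu/\lambda)$, and the M\"obius map $T(z)=z/\lambda$ carries $\{\infty,0,1,\lambda,\mu,-\mu\}$ onto $\{\infty,0,1,\sigma(\lambda),\sigma(\mu),-\sigma(\mu)\}$; so already by Lemma \ref{lemita1} (independently of the finer case analysis of Proposition \ref{teo0}) this $\sigma$ fixes ${\mathcal M}(C^{(k)}_{r,\theta})$ pointwise while moving $r^{2}$, whence ${\mathcal M}(C^{(k)}_{r,\theta})\neq{\mathbb Q}(r^{2},r\cos\theta)$. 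In fact all four automorphisms of ${\mathbb Q}(\sqrt{2},e^{i\pi/3})$ extend to elements of $G_{C^{(k)}_{r,\theta}}$, so here ${\mathcal M}(C^{(k)}_{r,\theta})={\mathbb Q}$ while ${\mathbb Q}(r^{2},r\cos\theta)={\mathbb Q}(\sqrt{2})$.

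The one step you must tighten is the verification of the hypothesis that no $\sigma\in{\rm Aut}({\mathbb C}/{\mathbb Q})$ sends $\mu=(1+\sqrt{2})e^{i\pi/3}$ to $-\mu$: it is not enough to observe that $-r$ is not a ${\mathbb Q}$-conjugate of $r$ and $-e^{i\pi/3}$ is not a conjugate of $e^{i\pi/3}$, because an automorphism could a priori move both factors simultaneously. You must exclude all four conjugates $(1\pm\sqrt{2})e^{\pm i\pi/3}$ of $\mu$, e.g.\ by comparing moduli and arguments, or by noting that the minimal polynomial of $\mu$ is $x^{4}-2x^{3}+5x^{2}+2x+1$ while that of $-\mu$ is $x^{4}+2x^{3}+5x^{2}-2x+1$. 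With that repaired, your conclusion stands: the question as stated has a negative answer, and some arithmetic substitute for $r^{4}\in{\mathbb Q}$ (one preventing $\sigma(r^{2})$ from being $r^{-2}$ and, more generally, excluding cases (2), (3), (5)--(12) of Proposition \ref{teo0}) is genuinely needed.
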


\subsection{Some remarks}
In the above, we have assume that $r \neq r_{\theta}$ and that $e^{i\theta} \notin \{\pm 1, \pm i\}$. Below, we provide some information on the left cases.

\subsubsection{The case $r=r_{\theta}$, $e^{i\theta} \notin \{\pm 1, \pm i\}$}
If $r=r_{\theta}$, $e^{i\theta} \notin \{\pm 1, \pm i\}$ and $k \geq 2$ even, then the orbifold ${\mathcal O}=C^{(k)}_{r_{\theta},\theta}/H$ admits the conformal involution $$T(z)=r_{\theta}e^{i\theta}\left(\frac{z+r_{\theta}e^{i\theta}}{z-r_{\theta}e^{i\theta}}\right),$$
whose fixed points are
$$-\left(\sqrt{2}-1\right)r_{\theta}e^{i\theta}, \quad \left(\sqrt{2}+1\right)r_{\theta}e^{i\theta}.$$
Direct computations permit us to see that $T$ is the only non-trivial conformal automorphism of ${\mathcal O}$ (the computations are similar to those done by Earle in \cite{Earle2}). The transformation $T$ lifts to a conformal involution $\widehat{T}$ of the curve $C^{(k)}_{r_{\theta},\theta}$ \cite{GHL} and $C^{(k)}_{r_{\theta},\theta}/{\rm Aut}(C^{(k)}_{r_{\theta},\theta})=C^{(k)}_{r_{\theta},\theta}/\langle H, \widehat{T}\rangle={\mathcal O}/\langle T \rangle$ is an orbifold with signature $(0;2,2,2,2,2)$. As a consequence of \cite[Lemma 5.1]{Huggins2}, the curve $C^{(k)}_{r_{\theta},\theta}$ can be defined over its field of moduli. The computation of such a field of moduli can be done in a similar way as worked in \cite{HR}.

\subsubsection{The case $e^{i\theta} \in \{\pm 1, \pm i\}$}
If $r>1$, then (i) the curve $C^{(2)}_{r,0}$ admits the anticonformal involution $T_{2}([x_{1}:\cdots:x_{6}])=[\overline{x_{1}}:\cdots: \overline{x_{6}}]$
and (ii) the curve $C^{(2)}_{r,\pm \pi/2}$ admits the anticonformal involution $T_{1}([x_{1}:\cdots:x_{6}])=[\overline{x_{1}}:\overline{x_{2}}:\overline{x_{3}}:\overline{x_{4}}:\overline{x_{6}}:\overline{x_{5}}]$ (so they can also be defined over ${\mathbb R}$ by some suitable corresponding real curves). If $\widehat{\eta}$ is the order four anticonformal automorphism as in \eqref{orden4}, then we set $A=\widehat{\eta} \circ T_{1}$ and $B=\widehat{\eta} \circ T_{2}$. In this way, we have that $A$ defines a conformal automorphism of order $4$ on $C^{(2)}_{r,\pm \pi/2}$ and that $B$ defines a conformal involution on $C^{(2)}_{r,0}$.
We may see that ${\rm Aut}(C^{(2)}_{r,\pm \pi/2})=\langle H,B\rangle$ and that, for $r \neq r_{0}=1+\sqrt{2}$, ${\rm Aut}(C^{(2)}_{r,0})=\langle H,A\rangle$. Now, $C^{(2)}_{r,\pm \pi/2}/\langle H, A\rangle$ has signature $(0;2,2,4,4)$ and that $C_{r,0}/\langle H, B\rangle$ has signature $(0;2,2,2,2,2)$. 
As a consequence of \cite[Lemma 5.1]{Huggins2}, one has that $C^{(2)}_{r,0}$, for $r \neq r_{0}$, is definable over its field of moduli. If $r=r_{0}$, then $C^{(2)}_{r_{0},0}$ as an extra automorphism so that $C^{(2)}_{r_{0},0}/{\rm Aut}(C^{(2)}_{r_{0},0})$ has signature $(0;2,2,2,4)$; so it is also definable over its field of moduli \cite{AQ}.
For the case of $C^{(2)}_{r,\pm \pi/2}$, one may find a similar result as Proposition \ref{teo0} below.

\section{A couple of examples}\label{sec:examples}
Next, we provide two examples, the first one on which we may apply Theorem \ref{teo1} and the second one where the conclusions of Theorem \ref{teo1} do not hold.

\subsection{Example 1}
Let $r=2$, $k \geq 2$ even, and $\theta=2\pi/p$, where $p \geq 3$ is a prime. We may see that  $r_{\theta}=\sqrt{1+\cos(\theta)^{2}}+\cos(\theta) \neq 2$.
For every $\sigma \in {\rm Aut}({\mathbb C}/{\mathbb Q})$, it holds that 
$\sigma(2e^{i\pi/p}) \neq -2e^{i\pi/p}$. In this way, Theorem \ref{teo1} states that ${\mathcal M}(C^{(k)}_{2,2\pi/p})={\mathbb Q}(\cos(2\pi/p))$ and that a minimal field of definition of $C^{(k)}_{2,2\pi/p}$ is ${\mathbb Q}(e^{2 \pi i/p})$. In particular, if  $p=3$, then ${\mathcal M}(C^{(k)}_{2,2\pi/3})={\mathbb Q}$ and a minimal field of definition is ${\mathbb Q}\left(i\sqrt{3}\right)$, and if we take $p=5$, then ${\mathcal M}(C^{(k)}_{2,2\pi/5})={\mathbb Q}\left(\sqrt{5}\right)$ and a minimal field of definition is ${\mathbb Q}\left(i\sqrt{5+\sqrt{5}} \right)$.

\subsection{Example 2}\label{ejemplo2}
Let $r=2$, $k=2$, and $\theta=\pi/4$. We set $C_{r,\theta}=C^{(2)}_{r,\theta}$.
In this case, there is a $\sigma \in {\rm Aut}({\mathbb C}/{\mathbb Q})$ with $\sigma(2e^{i\pi/4})=-2e^{i\pi/4}$ (so one of the hypothesis of Theorem \ref{teo1} does not hold). The curve $C_{r,\pi/4}$ is already defined over ${\mathbb Q}\left(e^{\pi i/4}\right)={\mathbb Q}\left(\sqrt{2}\left(1+i\right)\right)$, which is a degree four Galois extension of ${\mathbb Q}$. As every $\sigma \in {\rm Aut}({\mathbb Q}(e^{\pi i/4})/{\mathbb Q})$ satisfies the conditions of Proposition \ref{teo0}, we may see that ${\mathcal M}(C_{2,\pi/4})={\mathbb Q} \neq {\mathbb Q}(\cos(\pi/4))$ (so, the conclusion of Theorem \ref{teo1} does not hold in this case). We know that $C_{2,\pi/4}$ is definable over a degree two extension of ${\mathbb Q}$. As we know that $C_{2,\pi/4}$ cannot be definable over ${\mathbb R}$, we only need to consider the extensions of the form ${\mathbb Q}(i\sqrt{d})$, where $d \geq 1$ is an integer. We may consider the chain of degree two Galois extensions
$${\mathbb Q}<{\mathbb Q}(i\sqrt{2})<{\mathbb Q}(e^{\pi i/4}).$$

We have that ${\rm Aut}({\mathbb Q}(e^{\pi i/4})/{\mathbb Q}(i\sqrt{2}))=\langle \tau \rangle \cong {\mathbb Z}_{2}$, where $\tau(\omega)=\omega^{3}$ and $\omega=e^{\pi i/4}$.  As $\tau(i\sqrt{2})=i\sqrt{2}$, we may choose an extension of $\tau$ to ${\rm Aut}({\mathbb C}/{\mathbb Q})$ so that $\tau(i)=-i$ (see $\rho$ below); so $\tau(\sqrt{2})=-\sqrt{2}$.
An isomorphism $f_{\tau}:C_{2,\pi/4} \to C_{2,\pi/4}^{\tau}=C_{2,3\pi/4}$ must satisfies that
$\pi \circ f_{\tau} =T \circ \pi,$
where $T(z)=-4/z$ (by Proposition \ref{teo0}). It follows that necessarily $f_{\tau}$ should have the form
$$f_{\tau}([x_{1}:x_{2}:x_{3}:x_{4}:x_{5}:x_{6}])$$
$$||$$
$$[\pm x_{2}:\pm 2ix_{1}:\pm x_{4}:\pm 2ix_{3}:\pm \sqrt{2}\omega^{3/2} x_{5}:\pm i\sqrt{2}\omega^{3/2} x_{6}].$$

Unfortunately, none of the above isomorphisms is defined over ${\mathbb Q}(e^{\pi i/4})$, they are defined over ${\mathbb Q}(e^{\pi i/8})$, so we cannot apply Weil's Galois descent theorem with respect to the Galois extension ${\mathbb Q}(i\sqrt{2})<{\mathbb Q}(e^{\pi i/4})$.

Let us now consider the degree $8$ Galois extension of ${\mathbb Q}$ given by ${\mathbb Q}(e^{\pi i/8})$. Set $\eta=e^{\pi i/8}$ and consider the chain of Galois extensions
$${\mathbb Q}<{\mathbb Q}(i\sqrt{2})<{\mathbb Q}(e^{\pi i/8}).$$

In this case, ${\rm Aut}({\mathbb Q}(e^{\pi i/8})/{\mathbb Q}(i\sqrt{2}))=\langle \rho \rangle \cong {\mathbb Z}_{4}$, where
$\rho(\eta)=\eta^{3}$. We have that $\rho(\omega)=\rho(\eta^{2})=\eta^{6}=\omega^{3}$, so $\rho$ is an extension of $\tau$. In particular, $C_{\lambda,\mu}^{\rho}=C_{\lambda,\mu}^{\tau}$. So, we may use $f_{\rho}=f_{\tau}$ by choosing the plus/minus signs, for instance
$$f_{\rho}([x_{1}:x_{2}:x_{3}:x_{4}:x_{5}:x_{6}])$$
$$||$$
$$[x_{2}:\pm 2ix_{1}:x_{4}:\pm 2ix_{3}: \sqrt{2}\omega^{3/2} x_{5}: i\sqrt{2}\omega^{3/2} x_{6}].$$

Now, we may try to use Weil's Galois descent theorem, with respect to the previous degree $4$ Galois extension. For it, to satisfy Weil's conditions, we need to define 
$$f_{\rho^{2}}=f_{\rho}^{\rho} \circ f_{\rho}, \; 
and \; f_{\rho^{3}}=f_{\rho^{2}}^{\rho} \circ f_{\rho}=f_{\rho}^{\rho^{2}} \circ f_{\rho}^{\rho}\circ f_{\rho}.$$

Next, one may check that the following equality holds
$$I=f_{\rho^{4}}=f_{\rho^{3}}^{\rho} \circ f_{\rho}=f_{\rho}^{\rho^{3}}\circ f_{\rho}^{\rho^{2}} \circ f_{\rho}^{\rho}\circ f_{\rho}.$$

It follows that ${\mathbb Q}(i \sqrt{2})$ is a minimal field of definition of $C_{2,\pi/4}$.

\section{Proof of Proposition \ref{teo0}}\label{Sec:pruebapropo}
Let $\theta$ be so that $e^{i\theta} \notin \{\pm 1, \pm i\}$ and  let $r>1$ be so that $r \neq r_{\theta}$. We set $\lambda=-r^{2}$ and $\mu=re^{i\theta}$ and consider the curve $D_{\lambda,\mu}=C^{(k)}_{r,\theta}$.
Let $\sigma \in {\rm Aut}({\mathbb C}/{\mathbb Q})$.
We want to determine when $\sigma \in {\rm Aut}({\mathbb C}/{\mathcal M}(D_{\lambda,\mu}))$, i.e., when 
the two curves $D_{\lambda,\mu}^{\sigma}$ and $D_{\lambda,\mu}$ are  conformallly equivalent Riemann surfaces. This happens 
if and only if there is a conformal isomorphism $f_{\sigma}:D_{\lambda,\mu} \to D_{\lambda,\mu}^{\sigma}=D_{\sigma(\lambda),\sigma(\mu)}$. As $\pi^{\sigma}=\pi$, the above is equivalent to have a M\"obius transformation $T_{\sigma}$ so that $\pi \circ f_{\sigma}= T_{\sigma} \circ \pi$, which is also equivalent to have a M\"obius transformation $T_{\sigma}$ so  that 
$$\{\infty,0,1,\lambda,\mu,-\mu\} \stackrel{T_{\sigma}}{\mapsto} \{\infty,0,1,\sigma(\lambda),\sigma(\mu),-\sigma(\mu)\}.$$

As already noticed in the proof of Lemma \ref{trivialgroup}, the only subsets of cardinality $4$ inside $\{\infty,0,1,\lambda,\mu,-\mu\}$ contained in a generalized circle are given by the following three ones:
$$\{\infty,0,1,\lambda\}, \quad \{\infty,0, \mu,-\mu\}, \quad \{1,\lambda,\mu, -\mu\}.$$

Moreover, the cross-ratios of these three subsets are non-equivalent under ${\mathbb G}$.
As M\"obius transformations send generalized circles to generalized circles, it follows that inside the collection $\{\infty,0,1,\sigma(\lambda),\sigma(\mu),-\sigma(\mu)\}$ the only subsets of cardinality $4$ contained in a generalized circle are given by the following two ones:
$$\{T_{\sigma}(\infty),T_{\sigma}(0),T_{\sigma}(1),T_{\sigma}(\lambda)\}, \; \{T_{\sigma}(\infty),T_{\sigma}(0),T_{\sigma}(\mu),T_{\sigma}(-\mu)\}, \;
\{T_{\sigma}(1),T_{\sigma}(\lambda),T_{\sigma}(\mu),T_{\sigma}( -\mu)\}.$$

Since M\"obius transformations preserve cross ratios,
the cross-ratios of these three subsets are non-equivalent under the action of ${\mathbb G}$.
In this way, we should also have that
$$[T_{\sigma}(\infty),T_{\sigma}(0),T_{\sigma}(1),T_{\sigma}(\lambda)]=[\infty,0,1,\lambda]=\lambda$$
$$[T_{\sigma}(\infty),T_{\sigma}(0),T_{\sigma}(\mu),T_{\sigma}(-\mu)]=[\infty,0,\mu,-\mu]=-1$$
$$[T_{\sigma}(1),T_{\sigma}(\lambda),T_{\sigma}(\mu),T_{\sigma}( -\mu)]=[1,\lambda,\mu,-\mu].$$

Since
$[\infty,0,\sigma(\mu),-\sigma(\mu)]=-1,$ we may see that  
$$\{\infty,0,\mu,-\mu\} \stackrel{T_{\sigma}}{\mapsto} \{\infty,0,\sigma(\mu),-\sigma(\mu)\}$$
and, in particular, that
$\{1,\lambda\} \stackrel{T_{\sigma}}{\mapsto} \{1,\sigma(\lambda)\}.$

Moreover, as seen at the end of Section \ref{cross-ratio}, we only have the following two possibilities.
$$(1) \quad \{\infty,0\} \stackrel{T_{\sigma}}{\mapsto}\{\infty,0\}, \quad \{\mu,-\mu\} \stackrel{T_{\sigma}}{\mapsto}\{\sigma(\mu),-\sigma(\mu)\}, \quad \{1,\lambda\} \stackrel{T_{\sigma}}{\mapsto} \{1,\sigma(\lambda)\}.$$
$$(2) \quad \{\infty,0\} \stackrel{T_{\sigma}}{\mapsto}\{\sigma(\mu),-\sigma(\mu)\}, \quad \{\mu,-\mu\} \stackrel{T_{\sigma}}{\mapsto}\{\infty,0\}, \quad \{1,\lambda\} \stackrel{T_{\sigma}}{\mapsto} \{1,\sigma(\lambda)\}.$$

\subsection{}
If we are in case (1), then we obtain the following possibilities. 
\begin{enumerate}
\item[(1.1)] If $T_{\sigma}(\infty)=\infty$ and $T(0)=0$, then $T_{\sigma}(z)=rz$, for a suitable $r \in {\mathbb C}-\{0\}$. As $T_{\sigma}$ sends $\{1,\lambda\}$ onto $\{1,\sigma(\lambda)\}$, we have two possibilities.
\begin{enumerate}
\item If $T_{\sigma}(1)=1$, then $T_{\sigma}(z)=z$. It follows that 
$$\sigma(\lambda)=\lambda, \quad \sigma(\mu)=\pm \mu.$$
\item If $T_{\sigma}(1)=\sigma(\lambda)$, then $T_{\sigma}(z)=\sigma(\lambda)z$. It follows that 
$$\sigma(\lambda)=1/\lambda, \quad \sigma(\mu)=\pm \mu/\lambda.$$
\end{enumerate}

\item[(1.2)] If $T_{\sigma}(\infty)=0$ and $T(0)=\infty$, then $T_{\sigma}(z)=r/z$, for a suitable $r \in {\mathbb C}-\{0\}$. Again, as $T_{\sigma}$ sends $\{1,\lambda\}$ onto $\{1,\sigma(\lambda)\}$, we have two possibilities.

\begin{enumerate}
\item If $T_{\sigma}(1)=1$, then $T_{\sigma}(z)=1/z$. It follows that 
$$\sigma(\lambda)=1/\lambda, \quad \sigma(\mu)=\pm 1/\mu.$$

\item If $T_{\sigma}(1)=\sigma(\lambda)$, then $T_{\sigma}(z)=\sigma(\lambda)/z$. It follows that 
$$\sigma(\lambda)=\lambda, \quad \sigma(\mu)=\pm \lambda/\mu.$$
\end{enumerate}

\end{enumerate}

\subsection{}
If we are in case (2), we obtain the following possibilities. 

\begin{enumerate}
\item[(2.1)] If $T_{\sigma}(\mu)=\infty$, so $T_{\sigma}(-\mu)=0$, then we must have that 
$$T_{\sigma}(z)=R \frac{z+\mu}{z-\mu},$$
where 
$$R=\left\{ \begin{array}{ll}
\sigma(\mu), & \mbox{if $T_{\sigma}(\infty)=\sigma(\mu)$}\\
\\
-\sigma(\mu), & \mbox{if $T_{\sigma}(\infty)=-\sigma(\mu)$}
\end{array}
\right.
$$

In particular, either one of the following holds:
$${\rm (a)} \; \{\sigma(\mu)\frac{1+\mu}{1-\mu},\sigma(\mu) \frac{\lambda+\mu}{\lambda-\mu}\}=\{1,\sigma(\lambda)\}, \quad \mbox{if $R=\sigma(\mu)$ }.$$
$${\rm (b)} \; \{-\sigma(\mu)\frac{1+\mu}{1-\mu},-\sigma(\mu) \frac{\lambda+\mu}{\lambda-\mu}\}=\{1,\sigma(\lambda)\}, \quad \mbox{if $R=-\sigma(\mu)$ }.$$

\begin{enumerate}
\item If $R=\sigma(\mu)$, then either
$$(i)\; \sigma(\mu) \dfrac{1+\mu}{1-\mu}=1, \mbox{ or } (ii)\; \sigma(\mu) \dfrac{\lambda+\mu}{\lambda-\mu}=\sigma(\lambda).$$

In case (i),  $$\sigma(\mu)=\frac{1-\mu}{1+\mu}, \quad \sigma(\lambda)=\left(\frac{1-\mu}{1+\mu}\right) \left(\dfrac{\lambda+\mu}{\lambda-\mu}\right).$$
In case (ii), 
$$\sigma(\mu) \dfrac{1+\mu}{1-\mu}=\sigma(\lambda), \quad \sigma(\mu) \dfrac{\lambda+\mu}{\lambda-\mu}=1,$$
which implies that  $$\sigma(\mu)=\frac{\lambda-\mu}{\lambda+\mu}, \quad \sigma(\lambda)=\left(\frac{1+\mu}{1-\mu}\right) \left(\dfrac{\lambda-\mu}{\lambda+\mu}\right).$$

\item If $R=-\sigma(\mu)$, then either
$$(i) \; \sigma(\mu) \dfrac{1+\mu}{1-\mu}=-1, \mbox{ or } (ii)\;  \sigma(\mu) \dfrac{\lambda+\mu}{\lambda-\mu}=-\sigma(\lambda).$$
In case (i),  
$$\sigma(\mu)=\frac{\mu-1}{1+\mu}, \quad \sigma(\lambda)=\left(\frac{1-\mu}{1+\mu}\right) \left(\dfrac{\lambda+\mu}{\lambda-\mu}\right).$$
In case (ii), 
$$\sigma(\mu) \dfrac{1+\mu}{1-\mu}=-\sigma(\lambda), \quad \sigma(\mu) \dfrac{\lambda+\mu}{\lambda-\mu}=-1,$$
which implies that  $$\sigma(\mu)=\frac{\mu-\lambda}{\lambda+\mu}, \quad \sigma(\lambda)=\left(\frac{1+\mu}{1-\mu}\right) \left(\dfrac{\lambda-\mu}{\lambda+\mu}\right).$$
\end{enumerate}

\item[(2.2)] If now $T_{\sigma}(\mu)=0$, so $T_{\sigma}(-\mu)=\infty$, then the computations are as before by interchanging the roles of $\mu$ with $-\mu$. So we obtain either one of the followings:
$$(i)\; \sigma(\mu)=\frac{1+\mu}{\mu-1}, \quad \sigma(\lambda)=\left(\frac{1+\mu}{1-\mu}\right) \left(\dfrac{\lambda-\mu}{\lambda+\mu}\right);$$
$$(ii) \; \sigma(\mu)=\frac{\lambda+\mu}{\mu-\lambda}, \quad \sigma(\lambda)=\left(\frac{1-\mu}{1+\mu}\right) \left(\dfrac{\lambda+\mu}{\lambda-\mu}\right);$$
$$(iii)\; \sigma(\mu)=\frac{1+\mu}{1-\mu}, \quad \sigma(\lambda)=\left(\frac{1+\mu}{1-\mu}\right) \left(\dfrac{\lambda-\mu}{\lambda+\mu}\right);$$
$$(iv)\; \sigma(\mu)=\frac{\lambda+\mu}{\lambda-\mu}, \quad \sigma(\lambda)=\left(\frac{1-\mu}{1+\mu}\right) \left(\dfrac{\lambda+\mu}{\lambda-\mu}\right).$$

\end{enumerate}

All the computations above permit us to see, in particular, that 
$\sigma_{0}(z)=\overline{z} \in {\rm Aut}({\mathbb C}/{\mathcal M}(D_{\lambda,\mu}))$. It satisfies that $\sigma_{0}(\lambda)=\lambda$ and that $\sigma_{0}(\mu)=-\lambda/\mu$.



\end{document}